\newtheorem{theorem}{Theorem}
\newtheorem{lemma}[theorem]{Lemma}
\newtheorem{proposition}[theorem]{Proposition}
\numberwithin{equation}{section}
\newcommand{\thmref}[1]{Theorem~\ref{#1}}
\newcommand{\eqnref}[1]{~(\ref{#1})}
\def\Der{\operatorname{Der}}
\newcommand{\C}{\ensuremath{\mathbb C}\xspace}
\renewcommand{\i}{\ensuremath{\mathfrak i}}
\newcommand{\N}{\mathbb{N}}
\newcommand{\Z}{\ensuremath{\mathbb{Z}}\xspace}
\renewcommand{\phi}{\varphi}
\renewcommand{\leq}{\leqslant}
\renewcommand{\geq}{\geqslant}
\newcommand{\p}{\partial}
\title[Polynomials from hyperelliptic Lie algebras]{Certain families of Polynomials arising in the study of hyperelliptic Lie algebras}
\author{Ben Cox}\author{Kaiming Zhao}
\address{Department of Mathematics \\
University of Charleston, South Carolina \\
66 George Street  \\
Charleston SC 29424, USA}\email{coxbl@cofc.edu}
\address{Department of Mathematics\\
 Wilfrid
Laurier University \\ Waterloo, ON \\
Canada N2L 3C5\\
 and College of
Mathematics and Information Science \\
Hebei Normal (Teachers)
University \\  Shijiazhuang, Hebei, 050016  \\ P. R. China. }
\email{kzhao@wlu.ca}
\keywords{Krichever Novikov Algebras, Automorphism Groups, Pell's Equation, associated Legendre polynomials, universal central extensions, superelliptic Lie algebras, superelliptic curves, DJKM algebras, F\'aa di Bruno's formula, Bell polynomials}
\begin{document}
\begin{abstract}
The associative ring $R(P(t))=\C[t^{\pm1},u \,|\, u^2=P(t)]$, where $P(t)=\sum_{i=0}^na_it^i=\prod_{k=1}^n(t-\alpha_i)$ with $\alpha_i\in\mathbb C$ pairwise distinct, is the coordinate ring of a hyperelliptic curve.  The Lie algebra  $\mathcal{R}(P(t))=\Der(R(P(t)))$ of derivations is called the hyperelliptic Lie algebra associated to $P(t)$.
 In this paper we  describe the universal central extension of $\Der(R(P(t)))$  in terms of certain families of polynomials which in a particular case are associated Legendre polynomials.  Moreover we describe certain families of polynomials that arise in the study of the group of units for the ring $R(P(t))$ where $P(t)=t^4-2bt^2+1$.   In this study pairs of Chebychev polynomials $(U_n,T_n)$ arise as particular cases of a pairs $(r_n,s_n)$ with $r_n+s_n\sqrt{P(t)}$ a unit in $R(P(t))$.  We explicitly describe these polynomial pairs as coefficients of certain  generating functions and show certain of these polynomials satisfy particular second order  linear differential equations.
\end{abstract}
\maketitle

\section{Introduction}

Throughout this paper we will take the set of natural numbers to be $\mathbb N=\{1,2,\dots\}$, the set of nonnegative integers will be denoted by $\mathbb
Z_+=\{0,1,2,3,\dots\}$, and we will assume all vector spaces and algebras are defined over
complex numbers $\C$.

The associative ring $R(P(t))=\C[t^{\pm1},u \,|\, u^2=P(t)]$, where $P(t)=\sum_{i=0}^na_it^i=\prod_{k=1}^n(t-\alpha_i)$ with $\alpha_i\in\mathbb C$ pairwise distinct, is the coordinate ring of a hyperelliptic curve.  The Lie algebra  $\mathcal{R}(P(t))=\Der(R(P(t)))$ of derivations is called the hyperelliptic Lie algebra associated to $P(t)$.
 In this paper we  describe the universal central extension of $\Der(R(P(t)))$  in terms of certain families of polynomials which in a particular case are associated Legendre polynomials.  Moreover we describe certain families of polynomials that arise in the study of the group of units for the ring $R(P(t))$ where $P(t)=t^4-2bt^2+1$.   In this study pairs of Chebychev polynomials $(U_n,T_n)$ arise as particular cases of a pairs $(r_n,s_n)$ with $r_n+s_n\sqrt{P(t)}$ a unit in $R(P(t))$.  We explicitly describe these polynomial pairs as coefficients of certain  generating functions and show certain of these polynomials satisfy particular second order  linear differential equations.

Let us give some background to this paper and more precise information of what is in the paper. The Laurent polynomial ring $\C[t, t^{-1}]$ can be considered as the ring
of rational functions on the Riemann sphere $\C \cup \{\infty\}$
with poles allowed only in $\{\infty, 0\}$. From geometric point of
view one can have  a natural generalization of the loop algebra
construction. Instead of the sphere with two punctures, one can
use any complex algebraic curve $X$ of genus $g$ with a fixed
subset $P$ of $n$ distinct points. This arrives at M. Schlichenmaier's definition of multipoint algebras of Krichever-Novikov affine type if we replace $\C[t, t^{-1}]$ with the ring $R$ of
meromorphic functions on $X$ with poles allowed only in $P$ in the
construction of affine Kac-Moody algebras (see \cite{MR2058804},  \cite{MR902293}, \cite{MR925072}, and \cite{MR998426}). The $n$-point affine Lie algebras which are a type of Krichever-Novikov algebra of genus zero also appeared in the
work of Kazhdan and Lusztig (\cite[Sections 4 \& 7]{MR1104840},\cite[Chapter 12]{MR1849359}).  Krichever-Novikov algebras are used to constuct analogues of important mathematical objects used in string theory but in the setting of a Riemann surface of arbitrary genus. Moreover Wess-Zumino-Witten-Novikov theory and analogues of the Knizhnik-Zamolodchikov equations are developed for analogues of the affine and Virasoro algebras (see the survey article \cite{MR2152962}, and for example  \cite{MR1706819},  \cite{MR2072650}, \cite{MR2058804}, \cite{MR1989644}, and \cite{MR1666274}).

In a recent paper \cite{CGLZ1}   the $n$-point Virasoro algebras $\tilde{\mathcal{V}}_a$ were studied,  which are natural generalizations
 of the classical Virasoro algebra and have as quotients multipoint genus zero Krichever-Novikov type algebras. Necessary and sufficient conditions for the latter two such Lie algebras
 to be isomorphic, and   their automorphisms, their derivation algebras were obtained, their universal
 central extensions,  and some other
 properties were also determined.   Also a large class of modules which were called modules of densities were constructed, and
  necessary and sufficient conditions for them to be irreducible were obtained.  The $n$-point Virasoro algebras all are coordinate rings of the rings the Riemann sphere with $n$-points removed.  

In another recent paper \cite{CGLZ2}, $m$-th
superelliptic Lie algebras $\mathcal{R}_m(P)$ associated to $P(t)\in\C[t]$ were defined,
  the necessary and sufficient conditions for such Lie
algebras to be simple were obtained, and   their universal central extensions and their derivation algebras were determined.  They are examples of genus greater than one Krichever-Novikov type algebras.

The present paper is a sequel to \cite{CGLZ2}.    The particular class we look at are  hyperelliptic curves, i.e., $m=2$.  Let $P(t)\in\C[t]$. Then we have the Riemann surfaces (commutative associative algebras) $R(P)=\C[t^{\pm1},u \,|\, u^2=P(t)].$ The Lie algebras  $\mathcal{R}(P)=\Der(R(P))$ are called  {\it  hyperelliptic Lie
algebras} due to the fact that $u^2=P(t)$ is a  hyperelliptic curve.  

Finally we should mention that other interesting families of non-classical orthogonal polynomials appear in the description of the center of the universal enveloping algebra of other particular Krichever-Novikov algebras such as the DKJM algebra (see \cite{MR3090080}).

 In the second section, we give a basis for the universal central extension of the Lie algebras
 $\mathcal{R}_m(P)$ and in the particular case that $0$ is not a root of $P(t)$ explicitly describe a set of basis of two cocycles. We also explicitly give examples the value of any 2-cocyle on a basis of $\mathcal{R}_m(P)$.    Our description uses Bra\'a di Bruno's formula and Bell polynomials to describe generating functions of families of polynomials appearing as coefficients of the basis elements.   In the particular case of $P(t)=t^{2r}-2bt^r+1$, $r\in\mathbb N$, associated Legendre polynomials naturally arise.
 Having such an explicit description of the two cocycles will allow, one using conformal field theoretic tools, to study free field type representations of these algebras.

To study isomorphisms and automorphisms between the Lie algebras $\mathcal{R}_m({P})$, from \cite{MR966871} we know that it is  equivalent  to considering isomorphisms and automorphisms
 of the Riemann surfaces  ${R}_m({P})$. This is in general a very hard problem. In particular it is known that many sporadic simple groups can appear. For some results on higher genus Riemann surfaces, see \cite{MR2203507, MR1796706} and the reference therein.

  In the third section, we describe the group of units of $R_2(P)$  which will be used in the last section,  in particular we explicitly describe this group in cases of$P(t)=t^4-2bt+1$, $b\neq \pm 1$ which is the most interesting case studied by Date, Jimbo, Kashiwara and Miwa \cite{DJKM}.  In this later paper they investigated integrable systems arising from Landau-Lifshitz differential equation.  When determining the unit group we find that it requires one find solutions of the polynomial Pell equation $f^2-g^2P=1$
for a given $P\in\C[t]$ which is a very famous and very hard problem.
See \cite{MR2183270}.
  Then \thmref{CGLZ2} describes the possible units in $\mathcal{R}_m(P)$ as $\mathcal{R}_m(P)^*\cong \mathbb C^*\times \mathbb Z\times \mathbb Z\times \mathbb Z$.  One may want to compare this to the description of the automorphism group of a hyperellptic curve given in \cite{MR2035219}.  The group of units of $R_2(P)$ are described in terms of pairs of polynomials $u_n$, $v_n$ satisfying $u_n+v_n\sqrt{P}=(u_1+v_0\sqrt{P})^n$ and are similar in some sense to the Chebyshev polynomials of the first and second kinds.   We explicitly describe these polynomial pairs as coefficients of generating function and show certain of these polynomials satisfy particular second order  linear differential equations.

\section{The hyperelliptic Lie algebras $\mathcal{R}(P(t))$}
Let  $P(t)=\sum_{i=l}^{n+l}b_it^i=t^l(t-a_1)\ldots(t-a_n)$ for some $l=0$ or $1$, $n\in \N$ and pairwise distinct nonzero $a_1,\ldots,a_n\in \C.$ Note that $b_{n+l}=1$.
It is easy to see from \cite{CGLZ2} that
$R(P)=\C[t^{\pm1}]\oplus  \C[t^{\pm1}]u$ and if we set  $\Delta:=P'\frac \p{\p u}+2u\frac \p{\p t}$
where $P'=\frac {\p P}{\p t}$ we have $\mathcal{R}(P)=R(P)\Delta$.
One of the reasons we are interested in the Lie algebra $\mathcal{R}(P):=\text{Der}R(P(t))$ is the following

\begin{theorem}\cite[Theorem 4]{CGLZ2}  The Lie algebra $\mathcal{R}(P(t))$ is  simple.
\end{theorem}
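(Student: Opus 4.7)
The plan is to take an arbitrary nonzero ideal $I\subseteq\mathcal{R}(P(t))$ and deduce $I=\mathcal{R}(P(t))$. Using $\mathcal{R}(P)=R(P)\Delta$ with bracket $[f_1\Delta,f_2\Delta]=(f_1\Delta(f_2)-f_2\Delta(f_1))\Delta$ and the identities $\Delta(t^a)=2at^{a-1}u$ and $\Delta(ut^a)=t^aP'+2at^{a-1}P$, I first record the three block brackets
\[
[g_1\Delta,g_2\Delta]=2(g_1g_2'-g_2g_1')u\Delta,\quad [h_1u\Delta,h_2u\Delta]=2P(h_1h_2'-h_2h_1')u\Delta,
\]
\[
[g\Delta,hu\Delta]=\bigl(2P(gh'-hg')+ghP'\bigr)\Delta,
\]
which exhibit a $\Z/2$-grading with $\mathcal{R}_0=\C[t^{\pm1}]u\Delta$ and $\mathcal{R}_1=\C[t^{\pm1}]\Delta$.

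The argument then proceeds in three steps. \emph{Step 1:} Rule out $I\subseteq\mathcal{R}_i$. If $I\subseteq\mathcal{R}_1$, then $[I,I]\subseteq\mathcal{R}_0\cap I=0$ forces $I$ abelian, and the vanishing $[f\Delta,h\Delta]=2(fh'-hf')u\Delta=0$ at $h=1,\,t$ yields $f=0$; if $I\subseteq\mathcal{R}_0$, plugging $h=1$ and $h=t$ into the centralizer condition $(2P(hf'-fh')+hfP')\Delta=0$ gives $2Pf'+fP'=0$ and $-2Pf=0$, forcing $f=0$. \emph{Step 2:} Show $I$ is $\Z/2$-graded. Given a mixed $D=g\Delta+hu\Delta\in I$, iterating $\ad(u\Delta):(g,h)\mapsto(2Pg'-gP',\,2Ph')$ (grading-preserving) and $\ad(\Delta):(g,h)\mapsto(2h'P+hP',\,2g')$ (grading-swapping) produces a family of elements in $I$ whose coefficient matrix has full rank, via a Wronskian-type nondegeneracy powered by $\gcd(P,P')=1$ (the hypothesis that $P$ has simple roots), permitting extraction of $g\Delta$ and $hu\Delta$ separately. \emph{Step 3:} With $I=(I\cap\mathcal{R}_0)\oplus(I\cap\mathcal{R}_1)$ and one component nonzero, reduce to a monomial $ct^mu\Delta\in I$ by a sequence of iterated brackets shrinking Laurent support; then
\[
[t^k\Delta,t^mu\Delta]=t^{k+m-1}\bigl(2(m-k)P+tP'\bigr)\Delta\in I
\]
for all $k\in\Z$, and because $\gcd(P,tP')\in\{1,t\}$ (again by distinctness of the nonzero roots $a_i$), the $\C$-span of $\{t^{k+m-1}(2(m-k)P+tP'):k\in\Z\}$ equals $\C[t^{\pm1}]$, so every $t^j\Delta\in I$; finally the commutators $[\Delta,t^j\Delta]=2jt^{j-1}u\Delta$ and $[t\Delta,t^j\Delta]=2(j-1)t^ju\Delta$ recover every $t^ju\Delta$, yielding $I=\mathcal{R}(P(t))$.

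The main obstacle is Step 2, where disentangling a mixed element into its graded components via iterated adjoint actions reduces to a linear-algebra nondegeneracy whose validity rests essentially on the distinctness of the roots $a_i$; a repeated root would produce a nonzero Laurent-polynomial solution to $2Pf'+fP'=0$ (essentially a power of $\sqrt{P}$ living in $R(P)$), yielding nontrivial invariants and breaking simplicity. A secondary technical point is the monomial-extraction step, where one must combine iterated brackets carefully to cancel the $P$-factors introduced by $[\mathcal{R}_0,\mathcal{R}_0]$; the same root-distinctness hypothesis underwrites this cancellation through $\gcd(P,tP')\in\{1,t\}$.
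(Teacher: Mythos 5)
The paper itself contains no proof of this statement: it is quoted from \cite{CGLZ2}, and the standard route for a Lie algebra of the form $R(P)\Delta$ (a Lie subalgebra of $\Der(R(P))$ that is also an $R(P)$-submodule) is to verify that $R(P)$ is $\Delta$-simple and invoke Jordan's criterion \cite{MR829385}, which is in the reference list. Measured against a direct attack, your bracket formulas, the $\Z/2$-decomposition, and Step 1 are correct, but both load-bearing steps have genuine gaps. In Step 2, a full-rank coefficient matrix does not ``permit extraction of $g\Delta$ and $hu\Delta$ separately'': an ideal $I$ is only a $\C$-subspace closed under bracketing with $\mathcal{R}(P)$, not a $\C[t^{\pm1}]$-submodule, so inverting a matrix with Laurent-polynomial entries (dividing by a Wronskian-type determinant) produces elements that have no reason to lie in $I$. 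Since the grading is the eigenspace decomposition for the order-two automorphism $u\mapsto -u$, and ideals of a Lie algebra need not be stable under automorphisms, gradedness of $I$ genuinely requires a different mechanism than the one you sketch.

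Step 3 has two further problems. First, $\mathcal{R}(P)$ carries no $\Z$-grading compatible with the bracket (the non-unit $P$ enters all the structure constants), so ``shrinking Laurent support to a monomial $ct^mu\Delta$'' by iterated brackets is not available the way it is for the Witt algebra, and you supply no mechanism for it. Second, the span claim is false as stated: take $P=t-1$ and put $j=k+m-1$, so that
\[
[t^k\Delta,\,t^mu\Delta]=\bigl((4m-2j-1)\,t^{j+1}-(4m-2j-2)\,t^j\bigr)\Delta .
\]
The coefficient of $t^j$ vanishes exactly at $j=2m-1$, and an attempt to produce $t^{2m-1}\Delta$ as a finite $\C$-linear combination forces $\lambda_{2m-2}\neq0$, then $\lambda_{2m-3}\neq 0$, and so on — an infinite descent. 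So the $\C$-span of $\{[t^k\Delta,t^mu\Delta]:k\in\Z\}$ has codimension one in $\C[t^{\pm1}]\Delta$. Your appeal to $\gcd(P,tP')\in\{1,t\}$ shows that the $\C[t^{\pm1}]$-\emph{ideal} generated by these elements is everything, which is strictly weaker than the statement about the $\C$-span that you actually need. The argument might be repairable by bracketing the newly obtained elements of $I$ with each other and iterating, but as written this step fails.
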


 Let $R=R(P)$ and  $\partial=\sqrt{P}\frac{d}{dt}.$ We know

\begin{theorem} \cite[Theorem 9]{CGLZ2}  Suppose that $R=R (P)$ and
$P=t^l(t-a_1)\ldots(t-a_n)$ for $l=0$ or $1$, $n\in \N$ and pairwise distinct nonzero $a_1,\ldots,a_n\in\C$.
Then $\dim R/\partial(R)=n+1$ and the universal central extension of
$\mathcal{R}(P)$ is $\mathcal{R}(P)\oplus R/\p(R)$ with brackets
 $$[f\p,g\p]=f\p(g)\p-g\p(f)\p+\overline{\p(f)\p(\p(g))}, \forall\,\,f,g\in R.$$
\end{theorem}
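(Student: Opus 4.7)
The plan is to break the proof into three stages: compute $\dim R/\p(R)$; verify that the stated formula defines a 2-cocycle; and establish universality. For the dimension I would use the decomposition $R = \C[t^{\pm 1}] \oplus \C[t^{\pm 1}]u$. Since $u^2 = P$ gives $2uu' = P'$, a direct computation yields $\p(f) = f'u$ for $f \in \C[t^{\pm 1}]$ and $\p(fu) = f'P + \tfrac{1}{2}fP'$ for $fu \in \C[t^{\pm 1}]u$, so $\p$ swaps the two summands and $R/\p(R)$ is the direct sum of the corresponding cokernels. The cokernel in $\C[t^{\pm 1}]u$ is manifestly spanned by $t^{-1}u$, contributing one dimension. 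For the cokernel of the operator $D(f) := f'P + \tfrac{1}{2}fP'$ on $\C[t^{\pm 1}]$, I would use the formal identity $D(f) = u(uf)'$ together with evaluation of $D(f)$ at each root $a_i$ (which gives $\tfrac12 f(a_i)\prod_{j\ne i}(a_i-a_j)$ up to a power of $a_i$) and careful tracking of the leading and trailing coefficients of $D(t^k)$ near $t=0$ and $t=\infty$. A bookkeeping argument then isolates exactly $n$ obstructions, yielding $\dim R/\p(R) = n+1$.

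Next I would verify that $\omega(f\p, g\p) := \overline{\p(f)\p^2(g)}$ defines a 2-cocycle on $\mathcal{R}(P)$ with values in $R/\p(R)$. Skew-symmetry modulo $\p(R)$ follows at once from the Leibniz rule $\p(\p(f)\p(g)) = \p^2(f)\p(g) + \p(f)\p^2(g)$. The Jacobi-type cocycle identity is then obtained by expanding $\omega([f\p,g\p],h\p)$ plus cyclic permutations, grouping terms using the Leibniz rule, and observing that the resulting sum is a total $\p$-derivative, hence zero in $R/\p(R)$.

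For universality I would invoke the general framework for universal central extensions of Lie algebras of the form $R\p$, where $R$ is a commutative $\C$-algebra and $\p$ is a single distinguished derivation (the Kassel-Loday/Bremner/Skryabin setup employed already in the predecessor paper): the kernel of the universal central extension is canonically $\Omega^1_R/dR$, and the universal cocycle matches the one above. For our $R$ the module of K\"ahler differentials $\Omega^1_R$ is free of rank one on $dt/u$, and the exterior differential corresponds to $\p$, so $\Omega^1_R/dR \cong R/\p(R)$. The main obstacle is the careful cokernel count for $D$ on $\C[t^{\pm 1}]$: although the underlying calculation is elementary, the contributions at the $n$ roots $a_i$, at $t=0$ (whose behaviour depends on whether $l=0$ or $l=1$), and at $t=\infty$ (which depends on the parity of $\deg P$) must be tallied delicately to land on exactly $n$ obstructions rather than $n\pm 1$. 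Once the dimension count is in hand, the cocycle check and universality are largely formal.
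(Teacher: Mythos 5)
First, a caveat: the paper does not prove this statement at all---it is imported verbatim from \cite[Theorem 9]{CGLZ2}, and the present text only uses its conclusion (the basis $\omega_0,\overline{t^{-1}},\dots,\overline{t^{n-2}}$ and the recursion $\sum_{i}(r+\tfrac{i}{2})b_i\overline{t^{r+i-1}}=0$). So there is no in-paper proof to compare against, and your proposal has to stand on its own. Your first stage is essentially sound in outline: the decomposition $R=\C[t^{\pm1}]\oplus\C[t^{\pm1}]u$, the fact that $\partial$ interchanges the summands via $\partial(f)=f'u$ and $\partial(fu)=D(f):=f'P+\tfrac12 fP'$, and the identification of the first cokernel with $\C\,\overline{t^{-1}u}$ are all correct. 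But your proposed functionals ``evaluation of $D(f)$ at each root $a_i$'' do not annihilate the image: since the $a_i$ are simple roots, $D(f)(a_i)=\tfrac12 f(a_i)P'(a_i)$ is generically nonzero, so these evaluations detect nothing about the cokernel. What actually closes the count is the leading/trailing-coefficient bookkeeping you mention in passing: writing $D(t^k)=\sum_i (k+\tfrac{i}{2})b_i t^{k+i-1}$, only $k=k_{\max}$ contributes to the top degree $k_{\max}+n+l-1$ and only $k=k_{\min}$ to the bottom degree $k_{\min}+l-1$, which yields both the spanning of the quotient by $n$ monomial classes and their linear independence. So the dimension count is recoverable, but the root-evaluation idea should be deleted rather than ``tallied delicately.''

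The serious gap is in your universality step. Kassel--Loday computes the kernel of the universal central extension of a current algebra $\mathfrak{g}\otimes R$ with $\mathfrak{g}$ perfect and identifies it with $\Omega^1_R/dR$; it says nothing about the derivation algebra $R\partial=\Der(R)$, which is not of that form. For Lie algebras of vector fields, the assertion that $H^2(R\partial,\C)\cong (R/\partial R)^*$ and that the universal cocycle is $(f\partial,g\partial)\mapsto\overline{\partial(f)\partial^2(g)}$ is precisely the content of the theorem being proved---already for $R=\C[t^{\pm1}]$ it is the Gel\cprime fand--Fuks computation of $H^2$ of the Witt algebra, not a formal consequence of $\Omega^1_R/dR\cong R/\partial R$. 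That isomorphism tells you what the answer ought to be; it does not show that an arbitrary $2$-cocycle on $\mathcal{R}(P)$ is cohomologous to one factoring through $\overline{\partial(f)\partial^2(g)}$, which requires an actual normalization-modulo-coboundaries argument (this is what \cite{CGLZ2}, following \cite{CGLZ1}, carries out). As written, your third stage assumes the theorem in the course of ``proving'' it, so the proposal establishes the dimension formula and the cocycle property but not universality.
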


Note that $\mathcal{R}(P)$ is the classical centerless Virasoro algebra if $n=0$, and $\mathcal{R}(P)$ with $l>1$ is isomorphic to one of the the Lie algebras in the above theorem. So our Lie algebra $\mathcal{R}(P)$ with the given restrictions on $P$ cover all cases of $P$ up to isomorphisms.

To study representations of the Lie algebras $\mathcal{R}(P)\oplus R/\p(R)$ with nontrivial central action, it is important to have a basis for its center $R/\p(R)$ with concrete formulae for their brackets. This is the main purpose of this section, which turns out to be complicated.

 From the proof of \cite[Theorem 9]{CGLZ2} we suppose a basis is given by $\omega_0:=\overline{t^{-1}\sqrt{P}}$,
and
$$
\{\overline{t^{-1}},\dots, \overline{t^{n-2}}\}.
 $$
 In the rest of this section we will derive some formulae for the 2-cocycles of $\mathcal{R}(P)$ in terms of this precise basis.

\subsection{The general case} We can calculate any 2-cocyle as being a linear combination of the coefficients of $\psi$ where $\psi(f\partial,g\partial)=\overline{\partial(f)\partial^2(g)}$.  In particular
\begin{align}
\psi(t^r\partial, t^s\partial)
&=rs\sum_{i=0}^n\left(s-1+\frac{i}{2}\right)b_i\delta_{r+s+i-2,0}\omega_0 \\
&=\begin{cases}
(rs(r-s)b_{-r-s+2}\omega_0)/2,&\quad \text{ if } 0\leq -r-s+2\leq n \\
0& \quad \text{ otherwise}.
\end{cases}\notag
 \end{align}
and
\begin{align}
&\psi(t^r\sqrt{P}\partial, t^s\sqrt{P}\partial) \\
 &=\sum_{i=0}^n\sum_{j=0}^n\left(r+\frac{i}{2}\right)\left(r+i-2+\frac{j}{2}\right)(r+i-1)b_ib_j\delta_{r+s+i+j-2,0}\omega_0 \notag
\end{align}
For the cross term $\psi(t^r\partial, t^s\sqrt{P}\partial)$, the formula is more complicated and we don't have a simple description of what it is but we have the following in the case $l=0$ (for $l=1$ a similar description can be made).
For all $r,s\in\mathbb Z$, we have
\begin{align}
&\psi(t^r\partial, t^s\sqrt{P}\partial) \label{crossterm}\\
&=\overline{rt^{r-1}\sqrt{P}\partial\left(\sum_{j=0}^n\left(s+\frac{j}{2}\right)b_jt^{s+j-1}\right)} \notag \\
&=\overline{rt^{r-1}\sqrt{P} \left(\sum_{j=0}^n\left(s+\frac{j}{2}\right)(s+j-1)b_jt^{s+j-2}\sqrt{P}\right)} \notag\\
&=r\sum_{i=0}^n\left(\sum_{j=0}^n\left(s+\frac{j}{2}\right)(r+s+j-2)b_j\right)b_i\overline{t^{r+s+i+j-3}}.\notag
   \end{align}

 We now consider the $\overline{t^{s+i+j-2}}$:  As for any $r\in\mathbb Z$ we have
\begin{align*}
\partial(t^r\sqrt{P})
&=\sum_{i=0}^n\left(r+\frac{i}{2}\right)b_it^{r+i-1}.
\end{align*}
so in the quotient $R/\partial R$ we have the recursion relation
\begin{align*}
\sum_{i=0}^n\left(r+\frac{i}{2}\right)b_i\overline{t^{r+i-1}}=0.
\end{align*}
As $b_n=1$, this proves that for $r\geq 0$
\begin{align*}
\overline{t^{r+n-1}}=-\left(r+\frac{n}{2}\right)^{-1}\sum_{i=0}^{n-1}\left(r+\frac{i}{2}\right)b_i\overline{t^{r+i-1}}.
\end{align*}
This implies that $\{\overline{t^{k}}\,|\, k\geq n-1\}$ can be written as a linear combination of $\{\overline{t^{-1}},\dots, \overline{t^{n-2}}\}$.
More precisely we can find polynomials $p_{r,i}=p_{r,i}(b_0,\dots, b_{n-1})$, $-1,\leq i\leq n-2$, such that
\begin{equation}\label{pki}
\overline{t^{k}}=\sum_{i=-1}^{n-2}p_{k,i}\overline{t^i}
\end{equation}
for all $k\geq n-1$.  We will set $p_{r,i}=\delta_{r,i}$ for $-1\leq i,r\leq n-2$ so \eqnref{pki} holds for all $k\geq -1$.
As $b_0\neq 0$ we have
\begin{equation}\label{negrecursion}
\overline{t^{r-1}}=-\frac{1}{rb_0}\sum_{i=1}^n\left(r+\frac{i}{2}\right)b_i\overline{t^{r+i-1}}.
\end{equation}
This means that $\{\overline{t^{-k}}\,|\, k\geq 2\}$ can be written as a linear combination of $\{\overline{t^{-1}},\dots, \overline{t^{n-2}}\}$.
%
More precisely we can find rational functions $q_{r,i}=q_{r,i}(b_0,\dots, b_{n-1})$, $-1,\leq i\leq n-2$, such that
\begin{equation}\label{qki}
\overline{t^{-k}}=\sum_{i=-1}^{n-2}q_{k,i}\overline{t^i}
\end{equation}
for all $k\geq 1$.

As a consequence of \eqnref{crossterm}, \eqnref{pki} and \eqnref{qki}, we have 
\begin{align}
&\psi(t^r\partial, t^s\sqrt{P}\partial) \label{crossterm2}\\
&= r\sum_{k=-1}^{n-2}\left(\sum_{i,j=0}^n\Theta(r+s+i+j-2)\left(\left(s+\frac{j}{2}\right)(r+s+j-2)b_j\right)b_i p_{r+s+i+j-3,k}(b)\right) \overline{t^k} \notag \\
&\quad +r\sum_{k=-1}^{n-2}\left(\sum_{i,j=0}^n\Theta(-(r+s+i+j-1))\left(\left(s+\frac{j}{2}\right)(r+s+j-2)b_j\right)b_iq_{-(r+s+i+j-3),k}(b)\right)\overline{t^k}  \notag 
   \end{align}
   where 
   $$
   \Theta(k)=\begin{cases}1 & \text{ if } k\geq 0 \\ 
   0 & \text{ for } k<0,
   \end{cases}
   $$
   is the Heaviside function.
Because of this we then turn to giving very general formulae for the polynomials $p_{k,i}$ and $q_{k,i}$.

If we let $p_{i,r}=p_{r,i}(b_0,\dots, b_{n-1})$ be polynomials in $b_0,\dots, b_{n-1}$, $i=-1,0,1,2\dots, n-2$, $r\geq -1$ that satisfy the recursion relation
 \begin{equation}\label{recursionrln2}
\sum_{j=0}^n\left(r+\frac{j}{2}\right)p_{r+j-1,i}(b_0,\dots, b_{n-1})
b_j =0\quad\text{ for all }r\geq n-1,
\end{equation}
with initial conditions $p_{r,i}=\delta_{i,r}$ for $-1\leq i,r\leq n-2$.  Set
\begin{equation}
P_i(b,z):=\sum_{k\geq 0}p_{k-1,i}z^k=\sum_{k\geq -1}p_{k,i}z^{k+1},
\end{equation}
and
\begin{gather*}
 \bar P(z):=\sum_{j=0}^nb_jz^{n-j},\quad Q(z):=z \bar P'(z) -n \bar P(z), \\
   R_i(z):=z^{n}\sum_{j=0}^n\sum_{-j\leq l<n-1}(2l+j)b_jp_{l+j-1,i}z^{l},
\end{gather*}
where $-1\leq i\leq n-2$.
Note that $\bar P(z)$, $Q(z)$ and $R_i(z)$ are all polynomials in $z$ with coefficients polynomials in the $b_i$.
Then we have
\begin{align}\label{1storderode}
2z&  \bar P(z)\frac{d}{dz}P_i(b,z)+Q(z)P_i(b,z)
=R_i(z),
\end{align}
for $i=1,\dots,n$.
This has integrating factor
\begin{equation}
\mu=\exp\left(\int \frac{\bar P'(z)}{2  \bar P(z)}-\frac{n}{2z}\,dz\right)=\sqrt{ \bar P(z)}z^{-n/2}.
\end{equation}
Here $z^{-n/2}$ is only to be used formally in the following when integrating and is not to be thought of as a multivalued function.   For $i=-1,\dots,n-2$ we obtain generating functions as solutions to the family of $n$ equations \eqnref{1storderode}:
\begin{equation}\label{slns}
P_i(b,z)=\frac{z^{n/2}}{\sqrt{ \bar P(z)}}\int\frac{R_i(z)}{2z^{\frac{n+2}{2}} \sqrt{\bar P(z)}}\,dz
\end{equation}
and they are given by hyperelliptic integrals.
In the above formula we expand $\displaystyle{\frac{1}{\bar P(z)^{1/2}}}$ as a Laurent series in $z$, multiply by $R_i(z)/2z^{(n+2)/2}$ and then integrate formally term by term.  Let us explain this more precisely:  If we expand $\displaystyle{\frac{1}{\sqrt{\bar P(t)} }=\sum_{k=0}^\infty h_k(b_0,\dots,b_{n-1})t^k}$,
then \eqnref{slns} becomes
\begin{align}\label{slns1.5}
P_i(b,z)
&=\frac{1}{2}\sum_{k=0}^\infty   \sum_{m=0}^\infty \sum_{j=0}^n\sum_{-j\leq l<n-1}h_k(b_0,\dots,b_{n-1})(2l+j)b_jp_{l+j-1,i}  \\
&\hskip 150pt \times \Big(\int  h_m(b_0,\dots,b_{n-1})z^{\frac{n-2}{2}+m+l}\,dz \Big)z^{k+(n/2)}\notag \\
&=\frac{1}{2}\sum_{k=0}^\infty   \sum_{m=0}^\infty \sum_{j=0}^n\sum_{-j\leq l<n-1}(2l+j)b_jp_{l+j-1,i}h_k(b_0,\dots,b_{n-1})h_m(b_0,\dots,b_{n-1}) Cz^{k+(n/2)}\notag    \\
&\quad + \sum_{k=0}^\infty   \sum_{m=0}^\infty \sum_{j=0}^n\sum_{-j\leq l<n-1}\frac{(2l+j)b_jp_{l+j-1,i}}{n +2(m+l)}h_k(b_0,\dots,b_{n-1})h_m(b_0,\dots,b_{n-1})z^{m+n+l + k}.\notag 
\end{align}
where $C$ is a constant of integration (which is zero if $n$ is odd as the left hand power series has no fractional powers of $z$). Then \eqnref{slns1.5} gives us a (rather complicated) description of $p_{a,i}$ as coefficient of $z^a$ in the above power series.
So we are left with providing a description of $h_k(b_0,\dots,b_{n-1})$ (see \eqnref{hkb} below).

One can expand $1/\sqrt{ P(z)}$ using Bell polynomials and Fa\`a di Bruno's formula as follows.
The Bell polynomials in the variables $z_1,z_2,z_3,\dots$ are defined to be
\begin{align*}
B_{m,k}(z_1,\dots,z_{m-k+1}):=\sum \frac{m!}{l_1!l_2!\cdots l_{m-k+1}!}\left(\frac{z_1}{1!}\right)^{l_1} \cdots \left(\frac{z_{m-k+1}}{(m-k+1)!!}\right)^{l_{m-k+1}}
\end{align*}
where the sum is over $l_1+l_2+\cdots =k$ and $l_1+2l_2+3l_3+\cdots =m$.

Now Fa\`a di Bruno's formula for the $m$-derivative of $f(g(x))$ is
\begin{align*}
\frac{d^m}{dx^m}f(g(x))=\sum_{l=0}^mf^{(l)}(g(x))B_{m,l}(g'(x),g''(x),\dots, g^{(m-l+1)}(x)).
\end{align*}
Setting $f(x)=1/\sqrt{x}$, $g(x)=\bar P(x)$ we get
\begin{equation}
f^{(m)}(x)=\frac{(-1)^m(2m-1)!!}{2^mx^{(2m+1)/2}}
\end{equation}
and $\bar P^{(k)}(0)=k!b_{n-k}$ so that
\begin{align*}
\frac{d^m}{dx^m}f(g(x))|_{x=0}=\sum_{l=0}^m\frac{(-1)^ l(2 l-1)!!}{2^ lb_n^{(2l+1)/2}}B_{m, l}(b_{n-1},2b_{n-2},\dots,(m-l+1)!b_{n-m+l-1}),
\end{align*}
where we set $b_k=0$ for $k\leq -1$.
As a consequence
\begin{align*}
\frac{1}{\sqrt{\bar P(t)} }&=\sum_{k=0}^\infty \frac{d^k}{d t^k}f(g( t))|_{ t=0} t^k \\
&\hskip-.6cm =\sum_{k=0}^\infty \frac{1}{k!} \left(\sum_{l=0}^k\frac{(-1)^ l(2 l-1)!!}{2^ lb_n^{(2l+1)/2}}B_{k, l}(b_{n-1},2b_{n-2},\dots,(k- l+1)!b_{n-k+l-1})\right) t^k ,
\end{align*}
and hence 
\begin{equation}\label{hkb}
h_k(b_0,\dots,b_{n-1})= \frac{1}{k!} \left(\sum_{l=0}^k\frac{(-1)^ l(2 l-1)!!}{2^ lb_n^{(2l+1)/2}}B_{k, l}(b_{n-1},2b_{n-2},\dots,(k- l+1)!b_{n-k+l-1})\right).
\end{equation}

 Lastly we derive the formulae for the $q_{k,i}$ in \eqnref{qki} in terms of generating functions.
 For $-1\leq i\leq n-2$ set
 \begin{equation}
 Q_i(b,z):=\sum_{k\geq -n+2}q_{k,i}z^{k+n-2}=\sum_{k\geq 0}q_{k-n+2,i}z^{k}
 \end{equation}
 where $q_{k,i}$, $k\geq -n+2$, are rational functions in $b_i$ and satisfy the recursion relation
 \begin{equation}
\overline{(t^{-1})^{1-r}}=-\frac{1}{rb_0}\sum_{i=1}^n\left(r+\frac{i}{2}\right)b_i\overline{t^{r+i-1}}.
\end{equation}
or
\begin{equation}
\overline{(t^{-1})^{k}}=-\frac{1}{(1-k)b_0}\sum_{i=1}^n\left(1-k+\frac{i}{2}\right)b_i\overline{(t^{-1})^{k-i}}.
\end{equation}
This gives us a recursion relation
 \begin{align}\label{qk2}
\sum_{j=0}^n\left(2k-(j+2) \right)b_jq_{k-j,i}=0.
\end{align}
for all $k\geq 2$.  These polynomials satisfy the initial condition $q_{k,i}=\delta_{k,-i}$ for $-n+2\leq i,k\leq 1$.
Set
\begin{gather}
P(z)=\sum_{i=0}^nb_iz^i,\qquad Q(z):=zP'(z)-2(n-1)P(z) ,\label{pqs}\\
   S_i(z):= \sum_{j=0}^n\sum_{j-n+2\leq l<2}(2l-(j+2))b_jq_{l-j,i}z^{l+n-2} .\notag
\end{gather}
Then
\begin{align*}
2z    P(z)\frac{d}{dz}Q_i(b,z)+Q(z)Q_i(b,z)
&=S_i(z)\end{align*}
which has integrating factor
$$
\mu=\exp\int \frac{Q(z)}{2zP(z)}\,dz=z^{-(n-1)}\sqrt{P(z)}
$$
Thus
\begin{equation}\label{slns2}
Q_i(b,z)=\frac{z^{n-1}}{\sqrt{   P(z)}}\int\frac{S_i(z)}{2z^n \sqrt{  P(z)}}\,dz.
\end{equation}

As in the case of $P_i(b,z)$, one can use F\'aa de Bruno's formula and Bell polynomials to find the Taylor series expansion of $Q_i(b,z)$ at $z=0$.

Next we illustrate the above generating functions in the examples below.
First we can directly obtain

\begin{lemma}
If $P(t)=t^2-2bt$ with $b\ne0$, then the center of the universal central extension of $\mathcal{R}(P)$ is two dimensional and we can write any 2-cocycle $\psi$ as
\begin{align*}
\psi(t^r\partial, t^s\partial)&=\left( r^3\delta_{r+s,0}-br(r-1)(2r-1)\delta_{r+s-1,0}\right)\omega_0,
 \\
\psi(t^r\partial, t^s\sqrt{P}\partial)&=3rb^{r+s+1}\left(r \left(4 s^2+5 s+2\right)+4 s^3+10 s^2+9 s+3\right)\\
&\hskip 100pt
\times \frac{(2r+2s-3)!!}{(r+s+1)!}\overline{1},\\
\psi(t^r\sqrt{P}\partial, t^s\sqrt{P}\partial)&=(r+1)^3\delta_{r+s+2,0}\omega_0 \\
&\quad -b(2r+1)(2r^2+2r+1)\delta_{r+s+1,0}\omega_0\\
&\quad +b^2(4r^2-1)r\delta_{r+s,0}\omega_0.
\end{align*}
where, by definition, $(2r+2s-1)!!=(2r+2s-1)\cdot(2r+2s-5)\cdots 5\cdot3\cdot1$ and
$$
 \frac{(2r+2s-3)!!}{(r+s+1)!}\overline{1}=\begin{cases} 0 &\text{ if $r+s\leq -2$} \\
  \frac{1}{3} &\text{ if $r+s= -1$} \\
-1 &\text{ if $r+s=0 $} .
\end{cases}
$$\end{lemma}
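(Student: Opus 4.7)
The plan is first to pin down the explicit reductions in $R/\partial R$ for the specific polynomial $P(t) = t^2 - 2bt$, then to substitute into the general cocycle formulas displayed earlier in the section, and finally to simplify.

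I would start with the two direct calculations $\partial(t^r) = rt^{r-1}\sqrt{P}$ and $\partial(t^r\sqrt{P}) = (r+1)t^{r+1} - b(2r+1)t^r$, obtained by writing out $\partial = \sqrt{P}\,d/dt$ applied to the two types of generators of $R$. The first immediately forces $\overline{t^{k}\sqrt{P}} = 0$ for every $k \neq -1$, so on the $\sqrt{P}$-side only $\omega_0 = \overline{t^{-1}\sqrt{P}}$ survives. The second yields the recursion $(r+1)\overline{t^{r+1}} = b(2r+1)\overline{t^{r}}$ in $R/\partial R$: setting $r = -1$ forces $\overline{t^{-1}} = 0$, a downward induction kills all the negative classes, and iterating upward from $r = 0$ gives $\overline{t^{k}} = b^{k}(2k-1)!!/k!\cdot \overline{1}$ for $k \geq 0$. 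This identifies $\{\omega_0, \overline{1}\}$ as a basis of the two-dimensional center.

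Next I would substitute the coefficients $(b_0, b_1, b_2) = (0, -2b, 1)$ into the general formulas from the subsection. For $\psi(t^r\partial, t^s\partial)$ the product $\partial(t^r)\partial(\partial(t^s))$ sits in $\mathbb{C}[t^{\pm1}]\sqrt{P}$, so only the class $\omega_0$ can arise; collecting the delta-supports $r+s+i-2 = 0$ with $i \in \{1, 2\}$ (since $b_0 = 0$) gives the two-delta expression $(r^3\delta_{r+s,0} - br(r-1)(2r-1)\delta_{r+s-1,0})\omega_0$. The same strategy handles $\psi(t^r\sqrt{P}\partial, t^s\sqrt{P}\partial)$, which again lies on the $\sqrt{P}$-side: the contributing terms come from the four pairs $(i,j) \in \{1,2\}^2$ in the double sum, producing the three delta-supports at $r+s = -2, -1, 0$ in the stated expression, where the coefficients $(r+1)^3$, $-b(2r+1)(2r^2+2r+1)$, and $b^2(4r^2-1)r$ pop out after a short combinatorial simplification.

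The cross term $\psi(t^r\partial, t^s\sqrt{P}\partial)$ is the main obstacle, because $\partial(t^r)\partial(\partial(t^s\sqrt{P}))$ now lies in $\mathbb{C}[t^{\pm 1}]$ (two factors of $\sqrt{P}$ combine to give back $P$), so the reduction $\overline{t^{k}} = b^{k}(2k-1)!!/k!\cdot \overline{1}$ must actually be used. Explicit expansion produces the three-term polynomial $r(s+1)^2 t^{r+s+1} - rb(4s^2+5s+2) t^{r+s} + 2rb^2 s(2s+1) t^{r+s-1}$, and the task is to push all three resulting classes onto a common double-factorial/factorial denominator of $(2r+2s-3)!!/(r+s+1)!$ by repeatedly applying $(2k+1)!! = (2k+1)(2k-1)!!$ and $(k+1)! = (k+1)k!$. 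The resulting polynomial identity in $r$ and $s$ yields the stated formula, and the boundary evaluations $r+s \in \{-2, -1, 0\}$ at the end of the lemma, where one or two of the three terms vanish because $\overline{t^{k}} = 0$ for $k \leq -1$, offer a useful consistency check on the final simplification.
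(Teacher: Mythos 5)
Your strategy is exactly the one the paper itself (implicitly) uses --- the lemma is introduced with ``we can directly obtain'' and no proof is given --- and all of your preparatory steps are correct: $\overline{t^k\sqrt{P}}=\delta_{k,-1}\,\omega_0$; the relation $(k+1)\overline{t^{k+1}}=b(2k+1)\overline{t^{k}}$ killing all $\overline{t^k}$ with $k\leq -1$ and giving $\overline{t^{k}}=b^{k}\tfrac{(2k-1)!!}{k!}\overline{1}$ for $k\geq 0$; the identification of $\{\omega_0,\overline{1}\}$ as a basis; the two $\omega_0$-valued formulas (which I have checked against the stated coefficients, including the less obvious $-b(2r+1)(2r^2+2r+1)$, coming from $(r+1)^2+r^2$); and the three-term expansion $r(s+1)^2t^{r+s+1}-rb(4s^2+5s+2)t^{r+s}+2rb^2s(2s+1)t^{r+s-1}$ of $\partial(t^r)\partial^2(t^s\sqrt{P})$.

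The gap is in the last step, which you assert rather than carry out: ``the resulting polynomial identity in $r$ and $s$ yields the stated formula.'' It does not. Writing $m=r+s$ and extracting the common factor $rb^{m+1}\tfrac{(2m-3)!!}{(m+1)!}$ from the three reduced terms leaves the bracket
\begin{equation*}
(s+1)^2(2m+1)(2m-1)-(4s^2+5s+2)(2m-1)(m+1)+2s(2s+1)m(m+1),
\end{equation*}
whose $m^2$-coefficient vanishes and which collapses to $1+s-2r-3rs$; this is not equal to $3\bigl(r(4s^2+5s+2)+4s^3+10s^2+9s+3\bigr)$. A direct check at $r=s=1$ confirms the mismatch: $\partial(t)\partial^2(t\sqrt{P})=(4t-3b)P=4t^3-11bt^2+6b^2t$, whose class is $\bigl(4\cdot\tfrac52-11\cdot\tfrac32+6\bigr)b^3\overline{1}=-\tfrac12 b^3\overline{1}$, whereas the displayed cross-term formula evaluates to $3b^3\cdot 37\cdot\tfrac16\,\overline{1}=\tfrac{37}{2}b^3\overline{1}$. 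So the very consistency check you propose at the end would expose that the stated cross-term coefficient cannot be reached from your (correct) expansion; as written, your proof papers over a simplification that fails, and the honest conclusion of your computation is $\psi(t^r\partial,t^s\sqrt{P}\partial)=rb^{r+s+1}(1+s-2r-3rs)\tfrac{(2(r+s)-3)!!}{(r+s+1)!}\overline{1}$ rather than the lemma's expression. You should either exhibit the claimed identity explicitly (and discover it is false) or flag the discrepancy with the statement; the first and third formulas of the lemma, by contrast, do follow exactly as you describe.
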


\subsection{The case $R=\mathbb C[t^{\pm1},u\,|\,u^2=t^2-2bt+c]$ with $c\ne0$} Up to isomorphism we may assume that $c=1$.
In the case $P(t)=t^2-2bt+1$, $l=0$, the center of the universal central extension is three dimensional and in terms of the basis elements $\omega_0:=\overline{t^{-1}\sqrt{P}}$,  $\overline{t^{-1}}$, $\overline{1} $,

\begin{align*}
\psi(t^r\partial, t^s\partial)&= \big(r^3\delta_{r+s,0}-br(r-1)(2r-1)\delta_{r+s-1,0} \\
&\hskip 100pt+r(r-1)(r-2)\delta_{r+s-2,0}\big)\omega_0
 \\
\psi(t^r\sqrt{P}\partial, t^s\sqrt{P}\partial)&=(r+1)^3\delta_{r+s+2,0} \omega_0\\
&\quad -b(2r+1)\left(2r^2+2r+1\right)\delta_{r+s+1,0}\omega_0\\
&\quad +\left(b^2 r \left(4 r^2-1\right)+2 r \left(r^2+1\right)\right)\delta_{r+s,0}\omega_0\\
&\quad -2 b (r-1) r (2 r-1)\delta_{r+s-1,0} \omega_0 \\
&\quad +r(r-1)(r-2)\delta_{r+s-2,0} \omega_0.
\end{align*}

In this case
\begin{align*}
\partial(t^k\sqrt{P})
&=(k+1)t^{k+1}-b\left(2k+1\right)t^k+kt^{k-1}
\end{align*}
so in the quotient $R/\partial R$ we have
\begin{align}\label{recursionrln}
(k+1)\overline{t^{k+1}}-b\left(2k+1\right)\overline{t^k}+k\overline{t^{k-1}} =0.
\end{align}
This is the recursion relation for the Legendre polynomials $p_k(b)$.
Thus for $k\geq 0$
\begin{align*}
\overline{t^{k}}&=p_{k}(b)\overline{1},
\end{align*}
For $k\leq  1$ we have
\begin{align*}
\overline{t^{-k}}&=p_{k-1}(b)\overline{t^{-1}}.
\end{align*}

Lastly we calculate
\begin{align*}
\psi(t^r\partial, t^s\sqrt{P}\partial)
&=r \left(s^2-s\right) \overline{t^{r+s-3}} \\
&\quad +r b \left( s-4  s^2\right)\overline{ t^{r+s-2}} \\
&\quad +r \left((4  s^2+2 s)b^2+2 s^2+s+1\right)\overline{ t^{r+s-1}} \\
&\quad -r b \left(4 s^2+5  s+2 \right)\overline{t^{r+s}}\\
&\quad  +r \left(s+1\right)^2 \overline{t^{r+s+1}}.
   \end{align*}
When $r+s\geq 3$ we get
\begin{align*}
&\psi(t^r\partial, t^s\sqrt{P}\partial) \\
&=r \left(s^2-s\right)p_{r+s-3}(b)\overline{1}  +r b \left( s-4  s^2\right)  p_{r+s-2}(b)\overline{1} \\
&\quad +r \left((4  s^2+2 s)b^2+2 s^2+s+1\right)p_{r+s-1}(b)\overline{1} \\
&\quad -r b \left(4 s^2+5  s+2 \right)p_{r+s}(b)\overline{1} +r \left(s+1\right)^2 p_{r+s+1}(b)\overline{1}.
   \end{align*}
If $r+s= 2$ we get
\begin{align*}
&\psi(t^r\partial, t^s\sqrt{P}\partial) \\
&=r (r-1)(r-2)\overline{t^{-1}}
+\frac{1}{2} b r \left(b^2 \left(r^2-3 r+1\right)-3 r^2+9 r-5\right)\overline{1}.
   \end{align*}
   If $r+s=1$ one has
   \begin{align*}
\psi(t^r\partial, t^s\sqrt{P}\partial)
&=-3 b (r-1)^2 r\overline{ t^{-1}}  +\frac{1}{2} r \left(b^2 \left(3 r^2-6 r+2\right)+3 r^2-6 r+4\right)\overline{1}.
   \end{align*}
When $r+s= 0$ we get
\begin{align*}
&\psi(t^r\partial, t^s\sqrt{P}\partial) \\
&=\frac{1}{2} r \left(3r(r-1)b^2+3 r^2-3 r+2\right)\overline{ t^{-1}} -b r \left(3 r^2-3 r+1\right) \overline{1}.
   \end{align*}
If $r+s=-1$ one obtains
\begin{align*}
\psi(t^r\partial, t^s\sqrt{P}\partial)
&=\frac{1}{2} b r \left(b^2 \left(r^2-1\right)-3 r^2+1\right)
\overline{t^{-1}} +r^3 \overline{1}.
   \end{align*}
When $r+s\leq -2$ we get
\begin{align*}
&\psi(t^r\partial, t^s\sqrt{P}\partial) \\
&=r \left(s^2-s\right) p_{-(r+s)+2}(b)\overline{t^{-1}}  +r b \left( s-4  s^2\right)p_{-(r+s)+1}(b) \overline{t^{-1}}\\
&\quad +r \left((4  s^2+2 s)b^2+2 s^2+s+1\right)p_{-(r+s)}(b) \overline{t^{-1}}\\
&\quad -r b \left(4 s^2+5  s+2 \right)p_{-(r+s)-1}(b)\overline{t^{-1}} +r \left(s+1\right)^2 p_{-(r+s)-2}(b)\overline{t^{-1}}.
   \end{align*}

\subsection{The case $R=\mathbb C[t^{\pm1},u\,|\,u^2=t^3+at+b]$ with $b\neq 0$.}
  Up to isomorphism we may assume that $b=1$. The center of the universal central extension is four dimensional and in terms of the basis elements $\omega_0:=\overline{t^{-1}\sqrt{P}}$,  $\overline{t^{-1}}$, $\overline{1} $, $\overline{t} $. We will express $\overline{t^r} $ in terms of this basis.

Here we have $P(t)=t^3+at+1$ and
\begin{align}
\partial(t^k\sqrt{P})
&=\sqrt{P}\partial(t^k\sqrt{P})=kt^{k-1}P+\frac{1}{2}t^kP' \notag \\
&=\left(k+\frac{3}{2}\right)t^{k+2}+\left(k+\frac{1}{2}\right)at^{k}+kbt^{k-1}
\end{align}
so that
\begin{align*}
 \bar{t^{2}}&=-\frac{a}{3}\bar 1\\
\bar{t^{3}}&= -\frac{3}{5}a\bar{t} -\frac{2}{5}\bar{1}  \\
\bar{t^{4}}&=-\frac{4}{7}\bar{t}+ \frac{5}{21} a^2\bar{1}  \\
\bar{t^{5}}&=\frac{7}{15}a^2\bar{t}+\frac{8a}{15}\bar{1} \\
\vdots &\qquad \vdots
\end{align*}
If we write
\begin{equation}
\overline{t^r}=p_{r,1}\overline{t} +p_{r,0}\overline{1}+p_{r,-1}\overline{t^{-1}}
\end{equation}
for some polynomials $p_{r,i}$, $i=-1,0,1$ in $a$
so that $p_{k,i}$, $i=-1,0,1$ satisfy the same recurrence relation as $t^k$ but with initial conditions
\begin{align*}
p_{-1,-1}&=0,\enspace p_{0,-1}=0, \enspace p_{1,-1}=0, \\
p_{-1,0}&=0,\enspace p_{0,0}=1,\enspace p_{1,0}=0, \\
p_{-1,1}&=0,\enspace p_{0,1}=0,\enspace p_{1,1}=1.
\end{align*}
So we get $Q(z)=-3-az^2$,
\begin{align}
R_{-1}(z)&=0\\
R_{0}(z)&=z^3\sum_{j=0}^n\sum_{-j\leq l\leq 0}(2l+j)b_jp_{l+j-1,0}z^{l} =-z,\\
R_{1}(z)&=z^3\sum_{j=0}^3\sum_{-j\leq l\leq 0}(2l+j)b_jp_{l+j-1,1}z^{l}=z^2,
\end{align}
and the generating series are given by \eqnref{slns} or rather \eqnref{slns1.5}
\begin{align*}
P_{0}(z)&=\frac{z^{3/2}}{\sqrt{ 1+az^2+z^3}}\int\frac{-z}{2z^{5/2} \sqrt{1+az^2+z^3}}\,dz \\ \\
&=z-\frac{a
   z^3}{3}-\frac{2  z^4}{5}+\frac{5 a^2 z^5}{21}+\frac{8}{15} a  z^6 +\left(\frac{16}{55}-\frac{15 a^3}{77}\right)z^7+\cdots  \\
P_{1}(z)&=\frac{z^{3/2}}{\sqrt{ 1+az^2+z^3}}\int\frac{z^{2}}{2z^{5/2} \sqrt{1+az^2+z^3}}\,dz \\
&=z^2 -\frac{3 a z^4}{5}-\frac{4
   z^5}{7}+\frac{7 a^2 z^6}{15}+\frac{348}{385} a  z^7+\cdots \\
\end{align*}

Similarly one has from \eqnref{negrecursion}
\begin{align*}
 \overline{t^{-1}}&=\overline{t^{-1}},\\
\overline{t^{-2}}&= -\frac{a}{2}\overline{t^{-1}} +\frac{1}{2}\overline{t} , \\
\overline{t^{-3}}&= \frac{3a^2}{8}\overline{t^{-1}} -\frac{1}{4}\overline{1}-\frac{3a}{8}\overline{t}, \\
\overline{t^{-4}}&= -\frac{15a^3+24b^2}{48}\overline{t^{-1}} +\frac{5a}{24}\overline{1}
+\frac{5a^2}{16}\overline{t}, \\
\vdots &\qquad \vdots  \\
\overline{t^{-k}}&=q_{k,-1}\overline{t^{-1}} +q_{k,0}\overline{1}+q_{k,1}\overline{t}.
\end{align*}
Then one needs to solve the differential equations
\begin{align*}
2z    P(z)\frac{d}{dz}Q_i(b,z)+Q(z)Q_i(b,z)
&=S_i(z), \quad i=-1,0,1.
\end{align*}
From \eqnref{pqs}
\begin{align*}
S_{-1}(z)&=0 \\
S_0(z)&=-2z -a z^2 \\
S_1(z)&=-4-3a z
\end{align*}
so that by \eqnref{slns2} (and F\'aa de Bruno's formula and Bell polynomials) 
\begin{align*}
Q_{-1}(z)
&=\frac{z^{2}}{\sqrt{ z^3+az+1}} \\
&=z^2-\frac{a}{2}z^3+\frac{3a^2}{8}z^4-\frac{15a^3+24}{48}z^5+\cdots   \\
Q_0(z)
&=\frac{z^{2}}{\sqrt{ z^3+az+1}}\int\frac{-2z -a z^2 }{2z^{3} \sqrt{1+az^2+z^3}}\,dz  \\
&=z-\frac{1}{4}z^4+\frac{5a}{24}z^5+\cdots   \\
Q_1(z)
&=\frac{z^{2}}{\sqrt{ z^3+az+1}}\int\frac{-4-3a z }{2z^{3} \sqrt{1+az^2+z^3}}\,dz  \\
&=1+\frac{1}{2}z^3-\frac{3a}{8}z^4+\frac{5a^2}{16}z^5+\cdots
\end{align*}
where one needs to use a constant of integration $1$, $a/2$, and $-a^2/8$ respectively.  Recall that (2.26) looks like the same recursion relation as (2.22) but allows one to solve for the negative powers of $\overline{t^{-k}}$ as a linear combination of $\left\{\overline{t^{-1}},\overline{1},\overline{t}\right\}$.  Thus we needed to use a different set of initial conditions $q_{k,i}=\delta_{k,-i}$, $-1\leq k\leq 1$, $-1\leq i\leq 1$ than for the $p_{k,i}$.  Using \eqnref{pqs} and \eqnref{slns2} one is lead to the calculation given above for $Q_i$, $i=-1,0,1$.

\subsection{The case $R=\mathbb C[t^{\pm1},u\,|\,u^2=t^{2r}-2bt^r+1]$, $r\in\mathbb N$.}   We assume $r\in \mathbb N$ is fixed throughout this subsection. The center of the universal central extension is $2r+1$ dimensional and in terms of the basis elements $\omega_0:=\overline{t^{-1}\sqrt{P}}$,  $\overline{t^{i}}$ for $i=-1, 0, 1, ...2r-2$. We will explain how to express $p_k=\overline{t^k} $ in terms of this basis. In this case we have
\begin{align*}
\partial(t^k\sqrt{P})
&=(k+r)t^{2r+k-1}-b\left(2k+r\right)t^{r+k-1}+kt^{k-1}
\end{align*}
which leads one to the recursion relation
\begin{align*}
(k+r)p_{2r+k-1}-b\left(2k+r\right)p_{r+k-1}+kp_{k-1}=0
\end{align*}

Let now $\alpha,\beta\in\mathbb C$, $c\in\mathbb R$, $c>0$ and $\gamma:=\alpha+\beta+1$. The associated Jacobi polynomials $\rho_k=P_k^{(\alpha,\beta)}(x;c)$, $k\in \mathbb Z_+$,  $\rho_{-1}=0$, $\rho_0=1$  satisfy the recursion relation
\begin{align*}
2(k+c+1)(&k+c+\gamma)(2k+2c+\gamma-1)\rho_{k+1} \\
&=(2k+2c+\gamma)\Big((2k+2c+\gamma-1)(2k+2c+\gamma +1)x \\
&\quad +(\gamma-1)(\gamma -2\beta-1)\Big)\rho_k-2(k+c+\gamma-\beta-1) \\
&\quad \times (k+c+\beta)(2k+2c+\gamma+1)\rho_{k-1}
\end{align*}
for all $k\in\mathbb Z_+$.   Setting $c=1$ gives the usual recursion relation for the Jacobi polynomials. Setting $\alpha=\beta=0$  into the associated Jacobi recursion relation above gives us
\begin{align*}
 (k+c+1) \rho_{k+1} =(2k+2c+1) x\rho_k-(k+c ) \rho_{k-1}
\end{align*}
for $k\geq 0$.
The recursion relation we have is
\begin{align*}
(k+r)p_{2r+k-1}-b\left(2k+r\right)p_{r+k-1}+kp_{k-1}=0
\end{align*}
Let $k=rs+q$ where $0\leq q\leq r-1$.   Then this
becomes
\begin{align*}
(rs+q+r)p_{2r+rs+q-1}-b\left(2(rs+q)+r\right)p_{r+rs+q-1}+(rs+q)p_{rs+q-1}=0
\end{align*}
For $l\in\mathbb Z$ with $r$ and $0\leq q\leq r-1$ setting $P_l^{q/r}(b):=p_{(l+1)r+q-1}$ we get
\begin{align*}
\left(s+\frac{q}{r}+1\right)P_{s+1}^{q/r}(b)-b\left(2s+2\left(\frac{q}{r}\right)+1\right)P_s^{q/r}(b)+\left(s+\frac{q}{r}\right)P_{s-1}^{q/r}(b)=0.
\end{align*}
Thus the $P_l^{q/r}(b)$ and hence the $p_{(l+1)r+q-1}$ are associated Legendre polynomials (or functions as the case may be).  Similarly the $q_{(l+1)r+q-1}$ are associated Legendre polynomials but with a different set of initial conditions. 
Then one can rewrite (\ref{crossterm2}) in terms of the associate Legendre polynomials and the basis $\{\overline{t^{-1}},\dots, \overline{t^{n-2}}\}$.

\section{Chebyshev polynomials and some related polynomials}

We recall below our description of the group of units $R^*(P)$
for the interesting case of $P(t)$ studied by
Date, Jimbo, Kashiwara and Miwa \cite{DJKM} where they investigated
integrable systems arising from Landau-Lifshitz differential
equation. Let $$\displaystyle{P(t)=\frac{t^4-2\beta
t^2+1}{\beta^2-1}},\ \ \beta\neq \pm 1.$$ Observe that in this case
$P(t)=q(t)^2-1$ where $q(t)=\frac{t^2-\beta}{\sqrt{\beta^2-1}}$.

For convenience, let
\begin{equation*}\begin{split}
& \lambda_0=\frac{t^2-\beta}{\sqrt{\beta^2-1}}+\sqrt{P},\\
& \lambda_1=\frac{t^2+1}{\sqrt{2(\beta+1)}}+\sqrt{\frac{\beta-1}{2}}\sqrt{P},\\
& \lambda_2=\frac{t^2- 1}{\sqrt{2(\beta-1)}}+\sqrt{\frac{\beta+1}{2}}\sqrt{P}.\\
\end{split}\end{equation*}
It is easy to verify that $\lambda_0, \lambda_1, \lambda_2\in R^*(P)$. Actually,
$\lambda_0 \bar\lambda_0=1$, $\lambda_1\bar\lambda_1=t^2$,
$\lambda_2\bar\lambda_2=t^2$, $\lambda_1\lambda_2=t^2\lambda_0$.
We have

\begin{theorem} \cite[Theorem 13]{CGLZ2}\label{CGLZ2}
Let $P(t)$ be as above.
\begin{itemize}\item[(a).]  As a multiplicative group, $R^*(P)$ is generated by $\C^*$, $t,
 \lambda_1, \lambda_2$.
\item[(b).] $R^*(P)\simeq\mathbb C^*\times\Z\times\Z\times\Z.$
\end{itemize}
\end{theorem}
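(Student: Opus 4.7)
The plan is to work on the smooth projective completion $E$ of $\mathrm{Spec}\,R(P)$, which is an elliptic curve because $P$ has four distinct roots for $\beta\neq\pm1$. Let $\sigma$ be the hyperelliptic involution $u\mapsto-u$ and let $\{0_\pm,\infty_\pm\}$ denote the four points lying above $t=0$ and $t=\infty$; none is a branch point since $P(0)=1/(\beta^2-1)\neq 0$ and $\deg P=4$. Because $R(P)$ is the coordinate ring of $E\setminus\{0_\pm,\infty_\pm\}$, an element of $R(P)$ is a unit iff its divisor on $E$ is supported on these four points, giving a short exact sequence
$$
1\longrightarrow\C^*\longrightarrow R^*(P)\xrightarrow{\mathrm{div}}\mathcal{P}\longrightarrow 0,
$$
where $\mathcal{P}$ denotes the group of principal degree-zero divisors of $E$ supported on those four points.

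The first step is to compute $\mathrm{div}(t),\mathrm{div}(\lambda_1),\mathrm{div}(\lambda_2)$. The norm identities $\lambda_i\bar\lambda_i=t^2$ for $i=1,2$ give $\mathrm{div}(\lambda_i)+\sigma(\mathrm{div}(\lambda_i))=2\,\mathrm{div}(t)$, so it suffices to locate the finite zeros; rationalizing $\lambda_1=0$ reduces to $t=0$, and evaluation at the two points above $t=0$ (where $u=\pm(\beta^2-1)^{-1/2}$) shows $\lambda_1$ vanishes only at $0_-$, to order $2$, while $\lambda_2$ vanishes only at $0_+$, to order $2$. Together with the obvious contribution at infinity this yields
\begin{align*}
\mathrm{div}(t)&=0_++0_--\infty_+-\infty_-,\\
\mathrm{div}(\lambda_1)&=2\cdot 0_--2\infty_+,\\
\mathrm{div}(\lambda_2)&=2\cdot 0_+-2\infty_+.
\end{align*}
For part (b), any relation $t^a\lambda_1^b\lambda_2^c=d\in\C^*$ passes under $\mathrm{div}$ to a $\Z$-linear dependence among these three divisors, whose $3\times 3$ coefficient matrix has determinant $\pm4\neq 0$; hence $a=b=c=0$ and $d=1$, giving the three independent $\Z$-summands.

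For part (a), let $L\subset\mathcal{P}$ be the rank-$3$ sublattice spanned by the three divisors above; the claim reduces to $L=\mathcal{P}$. I would verify this via Abel--Jacobi. With basepoint $\infty_-$, the map $D\mapsto[D]$ sends $\mathrm{Div}^0_\infty(E)$ to the subgroup of $E\cong\mathrm{Pic}^0(E)$ generated by $[0_+-\infty_-],[0_--\infty_-],[\infty_+-\infty_-]$. The principality of the computed divisors translates to $2[0_+-\infty_-]=2[0_--\infty_-]=2[\infty_+-\infty_-]=0$ together with $[\infty_+-\infty_-]=[0_+-\infty_-]+[0_--\infty_-]$, so this subgroup lies in the two-torsion $E[2]\cong(\Z/2)^2$. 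Since $E$ has genus one, the Abel--Jacobi embedding $E\hookrightarrow\mathrm{Pic}^0(E)$ is injective, so the three classes $[0_\pm-\infty_-],[\infty_+-\infty_-]$ are pairwise distinct and nonzero, exhausting $E[2]$; the image therefore has order exactly $4$, matching the index $[\mathrm{Div}^0_\infty(E):L]=4$ read off the determinant above. By counting, $L=\mathcal{P}$, so every $u\in R^*(P)$ factors as $d\cdot t^a\lambda_1^b\lambda_2^c$ with $d\in\C^*$ and $a,b,c\in\Z$, completing (a) and hence (b). The main obstacle is precisely this final identification $L=\mathcal{P}$: one must use that $E$ is a genuine elliptic curve, not a rational curve, for Abel--Jacobi to be injective and to prevent the three two-torsion classes from coinciding, which would shrink the Abel--Jacobi image below $E[2]$ and enlarge $\mathcal{P}$ past $L$.
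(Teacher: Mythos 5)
Your proof is correct, but note that the paper contains no proof of this statement at all: it is quoted from \cite[Theorem 13]{CGLZ2}, and the only indication given here of how it is established there is the remark in the introduction that computing $R^*(P)$ ``requires one find solutions of the polynomial Pell equation $f^2-g^2P=1$,'' i.e.\ a direct algebraic analysis of which elements $f+g\sqrt{P}\in\C[t^{\pm1}]\oplus\C[t^{\pm1}]\sqrt{P}$ are invertible. Your route is genuinely different and geometric: you pass to the smooth projective genus-one model, identify $R^*(P)/\C^*$ with the lattice $\mathcal P$ of principal degree-zero divisors supported on the four punctures, compute $\operatorname{div}(t)$, $\operatorname{div}(\lambda_1)$, $\operatorname{div}(\lambda_2)$, get independence from the determinant $\pm4$, and then force $L=\mathcal P$ by matching that index against the order of the image of the puncture divisors in $\operatorname{Pic}^0(E)$, which your two-torsion relations plus injectivity of Abel--Jacobi pin down to be exactly $E[2]\cong(\Z/2)^2$ of order $4$. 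All the divisor computations check out (e.g.\ $\lambda_1(0_\pm)=\tfrac{1}{\sqrt{2(\beta+1)}}(1\mp1)$, and the norm identities give the orders), and the final index count is airtight. What your approach buys is conceptual clarity and a mechanism that explains \emph{why} the answer is $\C^*\times\Z^3$ (four punctures give rank $3$, and no extra units appear because the puncture classes already exhaust $E[2]$); what the Pell-equation approach buys is that it is elementary, self-contained, and stays inside the ring. One point where you are terse but where the argument genuinely depends on a verification: $\lambda_1$ and $\lambda_2$ must have their double poles at the \emph{same} point at infinity, since otherwise the rows $(1,1,-1)$, $(0,2,-2)$, $(2,0,0)$ would have determinant $0$ and the index count would collapse. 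This does hold --- it follows either from the leading coefficients at $\infty_+$ or from $\lambda_1\lambda_2=t^2\lambda_0$ together with $\operatorname{div}(\lambda_0)=2\infty_--2\infty_+$ --- but it should be stated explicitly rather than absorbed into ``the obvious contribution at infinity.''
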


Next we will study properties of elements in the unit group $R^*(P)$.
 \begin{theorem} Define polynomials $u_n,v_{n-1} $ by $u_n+v_{n-1}\sqrt{P}=\lambda_0^n$ for $n\in\mathbb Z$. Then
 $$u_{n+2}(t)-2q(t)u_{n+1}(t)+u_n(t)=0, \  v_{n+2}(t)-2q(t)v_{n+1}(t)+v_n(t)=0,
 $$
 and
 \begin{equation}
 -P\frac{dq}{dx}\frac{d^2y}{dx^2}+ \left(P \frac{d^2q}{dx^2} -q\left(\frac{dq}{dx}\right)\right)\frac{dy}{dx}+n^2\left(\frac{dq}{dx}\right)^2y=0,
\end{equation}
 is the second order differential equation satisfied by the $u_n$. 

The $v_n(t)$ satisfies $$-P\frac{dq}{dx}\frac{d^2y}{dx^2}+ \left(P \frac{d^2q}{dx^2} - 3q\left(\frac{dq}{dx}\right)\right)\frac{dy}{dx}+n(n+2)\left(\frac{dq}{dx}\right)^2y=0.$$
 \end{theorem}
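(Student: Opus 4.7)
The plan is to exploit the quadratic equation satisfied by $\lambda_0$ over $\C[t]$, identify $u_n$ and $v_{n-1}$ with Chebyshev polynomials evaluated at $q(t)$, and then transport the classical Chebyshev ODEs from the Chebyshev variable to $t$ via the chain rule.

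First, since $P(t)=q(t)^2-1$ by construction, a direct computation gives $\lambda_0\bar\lambda_0=q^2-P=1$ and $\lambda_0+\bar\lambda_0=2q$, so $\lambda_0$ is a root of $X^2-2qX+1=0$. Multiplying the relation $\lambda_0^{2}-2q\lambda_0+1=0$ by $\lambda_0^n$ gives $\lambda_0^{n+2}-2q\lambda_0^{n+1}+\lambda_0^n=0$. Expanding each $\lambda_0^k=u_k+v_{k-1}\sqrt{P}$ and separating the rational and $\sqrt{P}$-components in $R(P)=\C[t^{\pm1}]\oplus\C[t^{\pm1}]\sqrt{P}$ yields the two three-term recurrences of the theorem simultaneously; in particular, induction from the obvious cases $n=0,1$ shows that $u_n$ and $v_{n-1}$ are honest polynomials (in $q(t)$, hence in $t$).

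Next, from $u_n=(\lambda_0^n+\bar\lambda_0^n)/2$ and $v_{n-1}\sqrt{P}=(\lambda_0^n-\bar\lambda_0^n)/2$, combined with $\lambda_0=q+\sqrt{q^2-1}$, I recognize the Binet-type formulas $u_n=T_n(q(t))$ and $v_{n-1}=U_{n-1}(q(t))$, where $T_n$ and $U_n$ are the Chebyshev polynomials of the first and second kinds. These satisfy the classical ODEs $(1-x^2)T_n''(x)-xT_n'(x)+n^2T_n(x)=0$ and $(1-x^2)U_n''(x)-3xU_n'(x)+n(n+2)U_n(x)=0$.

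Finally, to obtain the ODEs in $x=t$, I set $y(x)=T_n(q(x))$ (respectively $y(x)=U_n(q(x))$), apply the chain rule to get $T_n'(q)=y'/q'$ and $T_n''(q)=(y''-T_n'(q)\,q'')/(q')^2$, substitute into the Chebyshev ODE, clear denominators by multiplying by a suitable power of $q'$, and use $1-q^2=-P$ to produce a coefficient of $-P$ on the second derivative of $y$. The analogous computation with the $U_n$-equation gives the ODE attributed to $v_n$. I expect the main obstacle to be purely bookkeeping: keeping careful track of the powers of $q'$ that the inverse chain rule introduces, and matching the resulting coefficients against the formulas stated in the theorem (in particular determining the precise power of $dq/dx$ that multiplies each term after simplification).
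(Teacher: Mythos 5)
Your proposal is correct and follows essentially the same route as the paper: both arguments reduce $u_n$ and $v_{n-1}$ to the Chebyshev polynomials $T_n(q(t))$ and $U_{n-1}(q(t))$ via $q^2-P=1$ and then transport the classical Chebyshev differential equations to the variable $t$ by the inverse chain rule. The only (cosmetic) difference is that you obtain the three-term recurrences from the quadratic relation $\lambda_0^2-2q\lambda_0+1=0$ directly, whereas the paper extracts them from the generating function $\sum_n\lambda_0^nz^n=\bigl(1-(q+\sqrt{P})z\bigr)^{-1}$; your closing caution about tracking the powers of $dq/dx$ when clearing denominators is well placed, as that is exactly the delicate bookkeeping step in the final substitution.
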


\begin{proof} We have
\begin{align*}
\sum_{n\geq 0}u_n(t)z^n+ \sum_{n\geq 0}v_{n-1}(t)\sqrt{P(t)}z^n&= \sum_{n\geq 0} (q(t)+\sqrt{P})^nz^n  \\
&= \frac{1}{1- (q(t)+\sqrt{P(t)})z}\\
&= \frac{1-q(t)z+\sqrt{P(t)}z}{1- 2q(t)z+(q(t)^2- P(t))z^2}\\
&= \frac{1-q(t)z+\sqrt{P(t)}z}{1- 2q(t)z+z^2}\\
\end{align*}
We have the generating series for the set of polynomials as
$$
\sum_{n\geq 0}u_n(t)z^n=\frac{1-q(t)z}{1-2q(t)z+z^2},\quad \sum_{n\geq 0}v_{n-1}(t)z^n=\frac{1}{1-2q(t)z+z^2}
$$
The first equation gives us
$$
\sum_{n\geq 0}u_n(t)z^n-\sum_{n\geq 0}2q(t)u_n(t)z^{n+1}+\sum_{n\geq 0}u_n(t)z^{n+2}=1-q(t)z,
$$
or
$$
\sum_{n\geq 2 }u_n(t)z^n-\sum_{n\geq 1}2q(t)u_n(t)z^{n+1}+\sum_{n\geq 0}u_n(t)z^{n+2}=0,
$$
or
$$
\sum_{n\geq 0 }u_{n+2}(t)t^{n+2}-\sum_{n\geq 0}2q(t)u_{n+1}(t)z^{n+2}+\sum_{n\geq 0}u_n(t)z^{n+2}=0
$$
which in turn gives us the recurrence relation
\begin{equation} \label{uns}
u_{n+2}(t)-2q(t)u_{n+1}(t)+u_n(t)=0,
\end{equation}
with $u_0(t)=1$ and $u_1(t)=q(t)$.
The second equation gives us
$$
\sum_{n\geq 0}v_{n-1}(t)z^n-\sum_{n\geq 0}2q(t)v_{n-1}(t)z^{n+1}+\sum_{n\geq 0}v_{n-1}(t)z^{n+2}=1,
$$
or
$$
\sum_{n\geq 2 }v_{n-1}(t)z^n-\sum_{n\geq 1}2q(t)v_{n-1}(t)z^{n+1}+\sum_{n\geq 0}v_{n-1}(t)z^{n+2}=0,
$$
or
$$
\sum_{n\geq 0 }v_{n+1}(t)z^{n+2}-\sum_{n\geq 0}2q(t)v_{n}(t)z^{n+2}+\sum_{n\geq 0}v_{n-1}(t)z^{n+2}=0
$$
which in turn gives us the recurrence relation
\begin{equation} 
v_{n+2}(t)-2q(t)v_{n+1}(t)+v_n(t)=0,
\end{equation} 
with $v_0(t)=1$ and $v_1(t)=2q(t)$.

The Chebyshev polynomials $T_n(x)$ satisfy the recursion relation 
$$
T_{n+2}(x)-2xT_{n+1}(x)+T_n(t)=0,
$$
with $T_0(x)=1$ and $T_1(x)=x$.  We note that this first family of Chebyshev polynomials satisfy the differential equation
$$
(1-t^2)y''(t)-ty'(t)+n^2y=0.
$$
Since this is similar to \eqnref{uns} we use the above differential equation to find one for the $v_n$ and $u_n$ as follows
Now for $q=q(x)$ by the chain rule
\begin{align*}
\frac{d}{dx}y(q)&=\frac{dy(q)}{dq}\frac{dq}{dx},\\
 \frac{d^2}{dx^2}y(q)&=\frac{d}{dq}\left(\frac{dy(q)}{dq}\frac{dq}{dx}\right)\frac{dq}{dx}  \\
 &= \frac{d^2y(q)}{d^2q}\frac{dq}{dx}\frac{dq}{dx}+\frac{dy(q)}{dq}\frac{d}{dq}\left(\frac{dq}{dx}\right)\frac{dq}{dx}   \\
 &= \frac{d^2y(q)}{d^2q}\left(\frac{dq}{dx}\right)^2+\frac{dy(q)}{dq}\frac{d^2q}{dx^2} .
\end{align*}
so that
\begin{align*}
\frac{dy(q)}{dq}&=\frac{1}{\frac{dq}{dx}}\frac{d}{dx}y(q),\\
 \frac{d^2y(q)}{d^2q}
 &= \frac{1}{\left(\frac{dq}{dx}\right)^2}\left(\frac{d^2}{dx^2}y(q)-\frac{dy(q)}{dq}\frac{d^2q}{dx^2} \right) .
\end{align*}
Inserting these into the Chebyshev equation with respect to $q$, we get
\begin{align*}
\frac{1-q^2}{\left(\frac{dq}{dx}\right)^2}\left(\frac{d^2}{dx^2}y(q)-\frac{dy(q)}{dq}\frac{d^2q}{dx^2} \right)-\frac{q}{\frac{dq}{dx}}\frac{d}{dx}y(q)+n^2y(q)=0.
\end{align*}
or
\begin{align*}
(1-q^2)\left(\frac{d^2}{dx^2}y(q)-\frac{dy(q)}{dq}\frac{d^2q}{dx^2} \right)-q\frac{dq}{dx}\frac{d}{dx}y(q)+n^2y(q)\left(\frac{dq}{dx}\right)^2=0.
\end{align*}
or
\begin{equation}
 -P\frac{dq}{dx}\frac{d^2y}{dx^2}+ \left(P \frac{d^2q}{dx^2} -q\left(\frac{dq}{dx}\right)\right)\frac{dy}{dx}+n^2\left(\frac{dq}{dx}\right)^2y=0.
\end{equation}
This is the second order differential equation satisfied by the $u_n$. 

The second family of Chebyshev polynomials satisfy the differential equation
\begin{align*}
(1-t^2)y''(t)-3ty'(t)+n(n+2)y=0.
\end{align*}
so that the polynomials $v_n$ satisfy
\begin{equation}
-P\frac{dq}{dx}\frac{d^2y}{dx^2}+ \left(P \frac{d^2q}{dx^2} - 3q\left(\frac{dq}{dx}\right)\right)\frac{dy}{dx}+n(n+2)\left(\frac{dq}{dx}\right)^2y=0.
\end{equation}\end{proof}

 \begin{theorem} Define polynomials $a_n,b_{n-1} $ by $a_n+b_{n-1}\sqrt{P}=\lambda_2^n$ for $n\in\mathbb N$. Then
 $a_n=b_0^{-1}(b_n-a_1b_{n-1}),$
 and $b_n(t)$ satisfies $$\begin{aligned}
&0 = t^2(t^2-a_1^2)(a_1't-a_1)y''      \\
&\quad +\Big(  -   a_1''t^3 (t^2-a_1^2) + \left(2(1-n)t^3- 3a_1a_1't^2+a_1^2(2n+1)t \right)(a_1't-a_1)\Big)y'
 \notag \\
 &\quad +\Big(- a_1''t^2\left(-nt^2- a_1a_1'(n+2 )t+2a_1^2(n+1) \right)     \notag \\
& +\Big(\left(n(n-1)+ \left(n(a_1')^2-a_1a_1''\right)(n+2 )\right)t^2 - n a_1a_1'
 \left( 2n+1\right)t\Big)(a_1't-a_1)\Big)y.\notag
 \end{aligned}$$
 \end{theorem}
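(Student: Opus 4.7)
The plan is to derive a simple quadratic relation for $\lambda_2$ over $\C[t]$, rationalize $\sum\lambda_2^n z^n$ to extract closed-form generating functions for $(a_n)$ and $(b_n)$, identify $b_n(t)$ as a rescaled Chebyshev polynomial of the second kind evaluated at $a_1(t)/t$, and then transport the classical Chebyshev ODE through the chain rule.

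First, the conjugate $\bar\lambda_2:=a_1-b_0\sqrt{P}$ satisfies $\lambda_2+\bar\lambda_2=2a_1$ and $\lambda_2\bar\lambda_2=t^2$, relations recorded in the setup preceding \thmref{CGLZ2}. Expanding $\lambda_2^{n+1}=(a_n+b_{n-1}\sqrt{P})(a_1+b_0\sqrt{P})$ and separating the rational from the $\sqrt{P}$-part yields $a_{n+1}=a_1 a_n+b_0 P\,b_{n-1}$ and $b_n=b_0 a_n+a_1 b_{n-1}$; solving the second identity gives immediately $a_n=b_0^{-1}(b_n-a_1 b_{n-1})$, which is the first claim.

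The factorization $(1-\lambda_2 z)(1-\bar\lambda_2 z)=1-2a_1 z+t^2 z^2$ allows one to rationalize
\[
\sum_{n\geq 0}\lambda_2^n z^n=\frac{1}{1-\lambda_2 z}=\frac{1-a_1 z+b_0\sqrt{P}\,z}{1-2a_1 z+t^2 z^2},
\]
whence matching $\sqrt{P}$-parts gives $\sum_{n\geq 0}b_n z^n=b_0/(1-2a_1 z+t^2 z^2)$. Comparing this with the classical generating function $\sum_{n\geq 0} U_n(x)z^n=(1-2xz+z^2)^{-1}$ for the Chebyshev polynomials of the second kind, after the substitution $x\mapsto a_1/t$ and $z\mapsto tz$, produces exactly the same denominator; matching coefficients then yields the key identification $b_n(t)=b_0\,t^n\,U_n(a_1(t)/t)$.

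Since $U_n$ satisfies $(1-x^2)w''-3xw'+n(n+2)w=0$, one then writes $y(t)=b_n(t)=b_0 t^n w(x)$ with $x=a_1/t$ and computes $x_t=\delta/t^2$, $x_{tt}=(a_1''t^2-2\delta)/t^3$, and $1-x^2=(t^2-a_1^2)/t^2$, where $\delta:=a_1't-a_1$. Inverting the chain-rule identities to express $w,w',w''$ in terms of $y,y',y''$, substituting into the Chebyshev ODE, and clearing denominators by multiplying through by $t^4\delta$ produces a second-order linear ODE for $y=b_n(t)$. The main obstacle lies here: the raw chain-rule substitution yields terms of degree up to three in $\delta$, which must be systematically reduced to the linear-in-$\delta$ form appearing in the statement by repeated use of $\delta^2=(a_1't-a_1)\delta$, after which collection by $y'',y',y$ and, within the latter two, by the $\delta$-factor versus the $a_1''$-factor, produces precisely the stated coefficients.
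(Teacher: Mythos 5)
Your proposal is correct, and it takes a genuinely different route from the paper. The paper uses the Arfken--Weber device: it introduces $\sum_n w_nz^n=b_0(1-2a_1z+t^2z^2)^{-\alpha}$ for a \emph{generic} exponent $\alpha$, differentiates in $z$ and in $t$ to obtain two three-term relations, and carries out a long elimination that at several stages demands $\alpha\neq 1$, $\alpha\neq -n$, $\alpha\neq n$; only after deriving the equation for generic $\alpha$ does it set $\alpha=1$, and it must then prove separately, by induction in the Proposition that follows, that the $b_n$ actually satisfy the specialized equation. Your route --- reading off $\sum_{n\geq0}b_nz^n=b_0(1-2a_1z+t^2z^2)^{-1}$ from the rationalized generating function, recognizing $b_n(t)=b_0\,t^nU_n(a_1(t)/t)$ (which checks against the listed $b_0,\dots,b_4$), and transporting the classical equation $(1-x^2)w''-3xw'+n(n+2)w=0$ through $x=a_1/t$ --- avoids the genericity caveat entirely and subsumes that follow-up induction. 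I verified the transport: with $\delta=a_1't-a_1$ one has $x_t=\delta/t^2$, $x_{tt}=(a_1''t^2-2\delta)/t^3$, $1-x^2=(t^2-a_1^2)/t^2$, and after clearing denominators (the correct multiplier is $b_0t^n\delta^3$ rather than your $t^4\delta$ --- a harmless bookkeeping slip) and rewriting $\delta^2$ and $\delta^3$ as $\bigl((a_1')^2t^2-2a_1a_1't+a_1^2\bigr)^{k}\delta$, the coefficients of $y''$, $y'$ and $y$ agree \emph{identically}, as polynomials in $t,a_1,a_1',a_1''$, with those in the statement, so no proportionality constant or special form of $a_1$ is needed. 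Your one-line derivation of $a_n=b_0^{-1}(b_n-a_1b_{n-1})$ from expanding $\lambda_2^{n+1}=\lambda_2^n\lambda_2$ is also fine, and is more than the paper records for that claim. What the paper's method buys is flexibility --- the generic-$\alpha$ generating function treats Gegenbauer-type families uniformly --- while yours buys brevity, validity at $\alpha=1$ from the outset, and an explicit Chebyshev identification that the paper only alludes to in its abstract.
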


\begin{proof}
We now will apply the technique used in Arfken and Weber's book ``Mathematical Methods for Physicists".
First we have
\begin{align*}
\sum_{n\geq 0}a_nz^n+\sum_{n\geq 1}b_{n-1}z^n\sqrt{P}&=\sum_{n\geq 0}(a_1+b_0\sqrt{P})^nz^n  \\
&=\frac{1}{1-(a_1+b_0\sqrt{P})z}=\frac{1-a_1z+b_0\sqrt{p}z}{1-2a_1z+t^2z^2}
\end{align*}
as $a_1^2-b_0^2P=t^2$.
 The first five polynomials $b_n$ are
 \begin{gather*}
b_0=\sqrt{\frac{\beta+1}{2}},\quad b_1=2a_1b_0 , \\
  b_2=  b_0(4a_1^2-t^2),\quad
 b_3=4a_1b_0(2a_1^2-t^2) \\
b_4=b_0 \left(16 a_1^4-12 a_1^2 t^2+t^4\right).
 \end{gather*}

We differentiate the generating function
 $$
\sum_{n\geq 0} w_n(t)z^{n}= \frac{b_0}{(1-2a_1z+t^2z^2)^\alpha},
 $$
 and then in the end set $\alpha =1$ to get our desired differential equation.   If one takes $\alpha=1$ first and then differentiate, then it seems to us that it is not possible to get the second order differential equation that the $b_n$ satisfy.  However to prove that it is the desired differential equation we need to check by induction using the recursion relation that the differential equation is satisfied.

To that end we calculate

\begin{align*}
\sum_{n\geq 0}n w_n(t)z^{n-1}&=\frac{\alpha b_0(2a_1-2t^2z)}{(1-2a_1z+t^2z^2)^{\alpha+1}}  \\
&=\frac{\alpha b_0(2a_1-2t^2z)}{(1-2a_1z+t^2z^2)}\sum_{n\geq 0} w_n(t)z^n  \\
\end{align*}
so that
\begin{align*}
(1-2a_1z+t^2z^2)\sum_{n\geq 0}n w_nz^{n-1}&=\frac{\alpha b_0(2a_1-2t^2z)}{(1-2a_1z+t^2z^2)^{\alpha}}  \\
&=\alpha (2a_1-2t^2z) \sum_{n\geq 0} w_nz^n  \\
\end{align*}
The left hand side expands out to
\begin{align*}
(1-2a_1z&+t^2z^2)\sum_{n\geq 0}n w_nz^{n-1}\\ &=\sum_{n\geq 0}n w_nz^{n-1}-\sum_{n\geq 0}2na_1 w_nz^{n}+\sum_{n\geq 0}nt^2 w_nz^{n+1} \\
&=w_1+\sum_{n\geq 2}n w_nz^{n-1}
-\sum_{n\geq 1}2na_1 w_nz^{n}+\sum_{n\geq 0}nt^2 w_nz^{n+1}  \\
&=w_1 \\
&\quad +\sum_{n\geq 0}\left((n+2)w_{n+2}
-2(n+1)a_1w_{n+1}+ nt^2 w_n\right)z^{n+1} .
\end{align*}
The right hand side expands out to
\begin{align*}
\alpha (2a_1-2t^2z) \sum_{n\geq 0} w_nz^n&= \sum_{n\geq 0}2\alpha a_1 w_nz^n  - \sum_{n\geq 0}2\alpha t^2  w_nz^{n+1}  \\
&=2\alpha a_1w_0+ \sum_{n\geq 1}2\alpha a_1 w_nz^n  - \sum_{n\geq 0}2\alpha t^2  w_nz^{n+1}  \\
&=2\alpha a_1w_0+ \sum_{n\geq 0}2\alpha \left( a_1 w_{n+1}-t^2 w_n\right)z^{n+1} .
\end{align*}
Equating these two expansions we get
\begin{align*}
2\alpha a_1w_0&=w_1,
\end{align*}
and
\begin{align*}
(n+2)w_{n+2}
-2(n+1)a_1w_{n+1}+ nt^2 w_n=2\alpha \left(a_1w_{n+1}-t^2 w_n\right)
\end{align*}
or
\begin{align*}
(n+2)w_{n+2}
-2a_1(n+1+\alpha )w_{n+1}+ (n+2\alpha )t^2 w_n=0.
\end{align*}
We rewrite this recursion relation as
\begin{align}\label{recursionreln}
 (n+1)w_{n+1}
-2a_1(n+\alpha  )w_{n}+ (n-1+2\alpha )t^2w_{n-1}=0,
 \end{align}
 and then differentiate to get
 \begin{align*}
(n+1)&w_{n+1}'
-2a_1(n+\alpha )w_{n}'-2a_1'(n+\alpha  )w_{n}  \\
&\qquad + (n-1+2\alpha )t^2w_{n-1}'+ 2(n-1+2\alpha)tw_{n-1}=0,
 \end{align*}
 which if we multiply by $t$ and substitute in the recursion relation gives us
  \begin{gather}
\begin{split} 0
&=(n+1)tw_{n+1}'
-2a_1(n+\alpha )tw_{n}'-2(n+\alpha  )(a_1't-2a_1)w_{n}  \\
&\qquad + (n-1+2\alpha )t^3w_{n-1}'-2(n+1)w_{n+1}.\label{recursionrelndir}\end{split}
 \end{gather}

  On the other hand if we differentiate the generating function with respect to $t$ we get
 \begin{align*}
\frac{d}{dt} \frac{b_0}{(1-2a_1z+t^2z^2)^\alpha}&= \frac{\alpha b_0(2a_1'z-2tz^2)}{(1-2a_1z+t^2z^2)^{\alpha+1}} \\
&=\frac{\alpha b_0(2a_1'z-2tz^2)}{(1-2a_1z+t^2z^2)}\sum_{n\geq 0} w_nz^n
 \end{align*}
 So we get
 \begin{align*}
 (1-2a_1z+t^2z^2)\sum_{n\geq 0} w_n'z^n&=\alpha (2a_1'z-2tz^2)\sum_{n\geq 0 } w_nz^{n}
 \end{align*}
 and the left hand side expands to
  \begin{align*}
 (1-2a_1z+t^2z^2)\sum_{n\geq 0} w_n'z^n&= \sum_{n\geq 0} w_n'z^n-\sum_{n\geq 0}2a_1 w_n'z^{n+1}+\sum_{n\geq 0}t^2 w_n'z^{n+2}  \\
&=w'_0+w_1'z-2a_1w'_0z \\
&\quad + \sum_{n\geq 2} w_n'z^n-\sum_{n\geq 0}2a_1 w_n'z^{n+1}+\sum_{n\geq 0}t^2 w_n'z^{n+2}  \\
&=w'_0+w_1'z-2a_1w'_0z \\
&\quad + \sum_{n\geq 0}\left(w_{n+2}'- 2a_1w_{n+1}' + t^2 w_n'\right)z^{n+2}
 \end{align*}
 while the right hand side expands to
 \begin{align*}
\alpha (2a_1'z-2tz^2)\sum_{n\geq 0 } w_nz^{n}&=\sum_{n\geq 0 }2\alpha   a_1' w_nz^{n+1}- \sum_{n\geq 0 }2\alpha  t w_nz^{n+2} \\
&=2\alpha  a_1'w_0z+\sum_{n\geq 1 }2\alpha  a_1' w_nz^{n+1}- \sum_{n\geq 0 }2\alpha  t w_nz^{n+2} \\
&=2\alpha  a_1'w_0z+\sum_{n\geq 0 }2\alpha  \left(a_1'w_{n+1}- tw_n\right)z^{n+2}
 \end{align*}
 so that
 \begin{align*}
2\alpha  \left(a_1'w_{n+1}- tw_n\right)=w_{n+2}' - 2a_1w_{n+1}' +t^2 w_n'.
 \end{align*}
We rewrite as
  \begin{align}\label{firsteqn}
2\alpha  \left(a_1'w_{n}- tw_{n-1}\right)=w_{n+1}' - 2a_1w_{n}' +t^2 w_{n-1}'.
 \end{align}
Which we multiply by $(n+\alpha)$
  \begin{align*}
2\alpha &(n+\alpha) \left(a_1'w_{n}- tw_{n-1}\right)-(n+\alpha)w_{n+1}'  \\
 &\quad+ 2(n+\alpha)a_1w_{n}' -t^2 (n+\alpha)w_{n-1}'=0
 \end{align*} and add to
 \begin{align*}
(n+1)&w_{n+1}'
-2a_1(n+\alpha   )w_{n}'-2a_1'(n+\alpha   )w_{n}  \\
&\qquad + (n-1+2\alpha  )t^2w_{n-1}'+ 2(n-1+2\alpha  )tw_{n-1}=0,
 \end{align*}
 to get
   \begin{align*}
(1-\alpha)&w_{n+1}'+2(\alpha-1)a_1'(n+\alpha   )w_{n}+2(1-n-\alpha) ( \alpha-1)  tw_{n-1}  \\
 &\quad+t^2 (\alpha-1)w_{n-1}'=0
 \end{align*}
 Assuming $\alpha\neq 1$ we get
  \begin{align*}
 &-w_{n+1}'+2a_1'(n+\alpha   )w_{n}+2(1-n-\alpha)   tw_{n-1}+t^2 w_{n-1}'=0
 \end{align*}
 or
 \begin{equation}\label{neweqn}
  w_{n+1}'-t^2 w_{n-1}'=2a_1'(n+\alpha   )w_{n}+2(1-n-\alpha)   tw_{n-1}
 \end{equation}

Now recall the recursion relation
\begin{align}\label{recursionreln2}
(n+1)w_{n+1}
-2a_1(n+\alpha  )w_{n}+ (n-1+2\alpha )t^2w_{n-1}=0,
 \end{align}
We multiply \eqnref{neweqn} by $(n-1+2\alpha )t$ to get
   \begin{align*}
 0&= (n-1+2\alpha )t w_{n+1}'-(n-1+2\alpha )t^3 w_{n-1}'-2a_1'(n+\alpha   )(n-1+2\alpha )tw_{n} \\
 &\quad -2(n-1+2\alpha )(1-n-\alpha)   t^2w_{n-1}
  \end{align*}
 and add to it $2(1-n-\alpha)$ times\eqnref{recursionreln2}
    \begin{align}
0&= (n-1+2\alpha )t w_{n+1}'-(n-1+2\alpha )t^3 w_{n-1}'-2a_1'(n+\alpha   )(n-1+2\alpha )tw_{n}\notag \\
 &\quad -2(n-1+2\alpha )(1-n-\alpha)   t^2w_{n-1}+2(1-n-\alpha)(n+1)w_{n+1} \notag  \\
 &\quad
-4(1-n-\alpha)a_1(n+\alpha  )w_{n}+ 2(1-n-\alpha)(n-1+2\alpha )t^2w_{n-1}\notag \\ \notag\\
&= (n-1+2\alpha )t w_{n+1}'+(n-1+2\alpha )t^3 w_{n-1}'-2a_1'(n+\alpha   )(n-1+2\alpha )tw_{n} \notag\\
 &+2(1-n-\alpha)(n+1)w_{n+1}
-4(1-n-\alpha)a_1(n+\alpha  )w_{n}\notag\\ \notag\\
&= (n-1+2\alpha )tw_{n+1}'-(n-1+2\alpha )t^3 w_{n-1}' +2(1-n-\alpha)(n+1)w_{n+1}  \notag\\
&\quad -2(n+\alpha   )\left(a_1'(n-1+2\alpha )t+2a_1(1-n-\alpha)\right)w_{n} \label{non-1}
\end{align}
to which we add\eqnref{recursionrelndir}
\begin{align}
0
&= (n-1+2\alpha )tw_{n+1}'-(n-1+2\alpha )t^3 w_{n-1}' +2(1-n-\alpha)(n+1)w_{n+1}  \notag\\
&\quad -2(n+\alpha   )\left(a_1'(n-1+2\alpha )t+2a_1(1-n-\alpha)\right)w_{n}\notag  \\
&\quad +(n+1)tw_{n+1}'
-2a_1(n+\alpha )tw_{n}'-2(n+\alpha  )(a_1't-2a_1)w_{n}\notag  \\
&\quad +(n-1+2\alpha )t^3w_{n-1}'-2(n+1)w_{n+1} \notag\\ \notag\\
&= 2(\alpha +n)tw_{n+1}'   -2(n+\alpha)(n+1)w_{n+1}\notag\\
&\quad -2(n+\alpha   )\left(a_1'(n+2\alpha )t-2a_1(n+\alpha)\right)w_{n}\notag  \\
&\quad
-2a_1(n+\alpha )tw_{n}' \notag
 \end{align}
 Thus if $\alpha\neq -n$, one has
 \begin{gather}
 \begin{split}0
&= tw_{n+1}'  -a_1 tw_{n}' - (n+1)w_{n+1}\\
&\quad - \left(a_1'(n+2\alpha )t-2a_1(n+\alpha)\right)w_{n} \label{nn+1}\end{split}
 \end{gather}

Reconsider\eqnref{recursionrelndir} from which we subtract $(n+1)t$ times\eqnref{neweqn}
 \begin{align}
0
&=(n+1)tw_{n+1}'
-2a_1(n+\alpha )tw_{n}'-2(n+\alpha  )(a_1't-2a_1)w_{n}  \notag \\
&\quad + (n-1+2\alpha )t^3w_{n-1}'-2(n+1)w_{n+1} \notag \\
&\quad-(n+1)tw_{n+1}'+(n+1)t^3w_{n-1}'+2a_1'(n+\alpha   )(n+1)tw_{n} \notag\\
&\quad +2(1-n-\alpha)  (n+1)t^2w_{n-1}\notag  \\ \notag \\
&=
-2a_1(n+\alpha )tw_{n}' \notag \\
&\quad + (n-1+2\alpha )t^3w_{n-1}'+(n+1)t^3w_{n-1}'\notag \\
&\quad +2a_1'(n+\alpha   )(n+1)tw_{n} -2(n+\alpha  )(a_1't-2a_1)w_{n} \notag\\
&\quad -2(n+1)w_{n+1} +2(1-n-\alpha)  (n+1)t^2w_{n-1}\notag  \\ \notag \\
&=
-2a_1(n+\alpha )tw_{n}' \notag \\
&\quad + 2(n+\alpha )t^3w_{n-1}'\notag \\
&\quad +2(n+\alpha   )(a_1'(n+1)t  -(a_1't-2a_1))w_{n} \notag\\
&\quad -2(n+1)w_{n+1} +2(1-n-\alpha)  (n+1)t^2w_{n-1}\notag
  \end{align}
We then use\eqnref{recursionreln} to eliminate $w_{n+1}$:
 \begin{align}
0
&=
-2a_1(n+\alpha )tw_{n}' + 2(n+\alpha )t^3w_{n-1}'\notag \\
&\quad +2(n+\alpha   )(a_1'(n+1)t  -(a_1't-2a_1))w_{n} \notag\\
&\quad -4a_1(n+\alpha  )w_{n}+2(n-1+2\alpha )t^2w_{n-1} +2(1-n-\alpha)  (n+1)t^2w_{n-1}\notag  \\ \notag \\
&=
-2a_1(n+\alpha )tw_{n}' + 2(n+\alpha )t^3w_{n-1}'\notag \\
&\quad +2(n+\alpha   )(a_1'(n+1)t  -(a_1't-2a_1))w_{n}-4a_1(n+\alpha  )w_{n} \notag\\
&\quad +2\left((n-1+2\alpha ) +(1-n-\alpha)  (n+1)\right)t^2w_{n-1}\notag  \\ \notag \\
&=
-2a_1(n+\alpha )tw_{n}' + 2(n+\alpha )t^3w_{n-1}'\notag \\
&\quad +2(n+\alpha   )a_1'nt w_{n}\notag\\
&\quad -2(n+\alpha)(n-1)t^2w_{n-1}\notag
  \end{align}
 so that if $\alpha\neq n$ we get
 \begin{align}
0
&=
-2a_1 tw_{n}' + 2 t^3w_{n-1}' +2 a_1'nt w_{n}  -2(n-1)t^2w_{n-1}  .\notag
  \end{align}
Replacing $n$ by $n+1$ in the above we get
 \begin{equation} 
0
=
-a_1 tw_{n+1}' +  t^3w_{n}' + a_1'(n+1)t w_{n+1}  -nt^2w_{n}  \label{2ndeqn}
  \end{equation}

  We add to \eqnref{2ndeqn} to $a_1$ times\eqnref{nn+1}
 \begin{align}
0&=-a_1 tw_{n+1}' +  t^3w_{n}' + a_1'(n+1)t w_{n+1}  -nt^2w_{n} \notag\\
&  +a_1tw_{n+1}'  -a_1^2 tw_{n}' - (n+1)a_1w_{n+1} - a_1\left(a_1'(n+2\alpha )t-2a_1(n+\alpha)\right)w_{n}  \notag \\ \notag \\
&=t^3w_{n}' + a_1'(n+1)t w_{n+1}  -nt^2w_{n}\notag \\
&\quad -a_1^2 tw_{n}' - (n+1)a_1w_{n+1} - a_1\left(a_1'(n+2\alpha )t-2a_1(n+\alpha)\right)w_{n}  \notag \\ \notag \\
 &= t(t^2-a_1^2)w_{n}' + (n+1)(a_1't -a_1)w_{n+1}  \notag \\
&\quad +\left(-nt^2- a_1a_1'(n+2\alpha )t+2a_1^2(n+\alpha)\right)w_{n}  \label{onlywn+1}
\end{align}

If we differentiate this we get
 \begin{align*}
0
 &= t(t^2-a_1^2)w_{n}'' +(3t^2-a_1^2-2ta_1a_1')w_{n}' \\
 &\quad + (n+1)a_1''tw_{n+1}  + (n+1)(a_1't -a_1)w_{n+1}'   \\
&\quad +(-2nt- (a_1')^2(n+2\alpha )t- a_1a_1''(n+2\alpha )t- a_1a_1'(n+2\alpha )  \\
&\quad +4a_1a_1'(n+\alpha))w_{n}+\left(-nt^2- a_1a_1'(n+2\alpha )t+2a_1^2(n+\alpha)\right)w_{n}'  \notag \\ \notag \\
 &= t(t^2-a_1^2)w_{n}''   \\
 &\quad + (n+1)a_1''tw_{n+1}  + (n+1)(a_1't -a_1)w_{n+1}'   \\
&\quad +\left(\left(-2n- \left((a_1')^2+a_1a_1''\right)(n+2\alpha )\right)t+a_1a_1'(3n+2\alpha ))\right)w_{n}  \\
&\quad +\left((3-n)t^2- a_1a_1'(n+2\alpha+2 )t+a_1^2(2n+2\alpha-1)\right)w_{n}'
\end{align*}

The next two steps are to eliminate the term with $w_{n+1}'$ in it and then
use\eqnref{onlywn+1} to remove the resulting equation with an $w_{n+1}$ residing in it.
To that end we multiply the above by $t$ and use\eqnref{2ndeqn} and $a'_1$ times\eqnref{nn+1} to remove the $w_{n+1}'$:
 \begin{align*}
0
 &= t^2(t^2-a_1^2)w_{n}''   \\
 &\quad + (n+1)a_1''t^2w_{n+1}  + (n+1)a_1't^2w_{n+1}'-(n+1)a_1tw_{n+1}'     \\
&\quad +\left(\left(-2n- \left((a_1')^2+a_1a_1''\right)(n+2\alpha )\right)t+a_1a_1'(3n+2\alpha ))\right)tw_{n}  \\
&\quad +\left((3-n)t^2- a_1a_1'(n+2\alpha+2 )t+a_1^2(2n+2\alpha-1)\right)tw_{n}'    \notag \\ \notag \\
 &= t^2(t^2-a_1^2)w_{n}''   \\
 &\quad + (n+1)a_1''t^2w_{n+1}   \\
 &\quad + (n+1)a_1't \left(a_1 tw_{n}' +(n+1)w_{n+1}+ \left(a_1'(n+2\alpha )t-2a_1(n+\alpha)\right)w_{n} \right)\\
 &\quad -(n+1)\left( t^3w_{n}' + a_1'(n+1)t w_{n+1}  -nt^2w_{n} \right)   \\
&\quad +\left(\left(-2n- \left((a_1')^2+a_1a_1''\right)(n+2\alpha )\right)t+a_1a_1'(3n+2\alpha ))\right)tw_{n}  \\
&\quad +\left((3-n)t^2- a_1a_1'(n+2\alpha+2 )t+a_1^2(2n+2\alpha-1)\right)tw_{n}'    \notag \\ \notag \\
 &= t^2(t^2-a_1^2)w_{n}''   \\
 &\quad + (n+1)\Big(a_1''t^2w_{n+1}   \\
 &\qquad +a_1 a_1't^2w_{n}' + (n+1)a_1'tw_{n+1}+a_1'\left(a_1'(n+2\alpha )t-2a_1(n+\alpha)\right)tw_{n}  \\
 &\qquad -t^3w_{n}' - a_1'(n+1)t w_{n+1}  +nt^2w_{n}  \Big)  \\
&\quad +\left(\left(-2n- \left((a_1')^2+a_1a_1''\right)(n+2\alpha )\right)t+a_1a_1'(3n+2\alpha ))\right)tw_{n}  \\
&\quad +\left((3-n)t^2- a_1a_1'(n+2\alpha+2 )t+a_1^2(2n+2\alpha-1)\right)tw_{n}'    \notag \\ \notag \\
 &= t^2(t^2-a_1^2)w_{n}''   \\
 &\quad + (n+1)\Big( a_1''t^2 w_{n+1}   \\
 &\qquad +\left(a_1 a_1't^2-t^3\right)w_{n}' +\left(nt^2+a_1'\left(a_1'(n+2\alpha )t^2-2a_1(n+\alpha)t\right)\right)w_{n}\Big)  \\
&\quad +\left(\left(-2n- \left((a_1')^2+a_1a_1''\right)(n+2\alpha )\right)t+a_1a_1'(3n+2\alpha ))\right)tw_{n}  \\
&\quad +\left((3-n)t^2- a_1a_1'(n+2\alpha+2 )t+a_1^2(2n+2\alpha-1)\right)tw_{n}'    \notag \\ \notag \\
 &= t^2(t^2-a_1^2)w_{n}''   \\
 &\quad + (n+1)a_1''t^2  w_{n+1}   \\
 &\qquad + (n+1)\left(a_1 a_1't^2-t^3\right)w_{n}' + (n+1)
 \left(nt^2+a_1'\left(a_1'(n+2\alpha )t^2-2a_1(n+\alpha)t\right)\right)w_{n}  \\
&\quad +\left(\left(-2n- \left((a_1')^2+a_1a_1''\right)(n+2\alpha )\right)t+a_1a_1'(3n+2\alpha ))\right)tw_{n}  \\
&\quad +\left((3-n)t^2- a_1a_1'(n+2\alpha+2 )t+a_1^2(2n+2\alpha-1)\right)tw_{n}'    \notag \\ \notag \\
\end{align*}

\begin{align*}
0
 &= t^2(t^2-a_1^2)w_{n}''   \\
 &\quad + (n+1)a_1''t^2 w_{n+1}   \\
&\quad +\Big(\left(\left(-2n- \left((a_1')^2+a_1a_1''\right)(n+2\alpha )\right)t+a_1a_1'(3n+2\alpha )\right)t\\
&\qquad+(n+1)
 \left(nt^2+ a_1'\left(a_1'(n+2\alpha )t^2-2a_1(n+\alpha)t\right)\right)\Big)w_{n}  \\
&\quad +\left((3-n)t^3- a_1a_1'(n+2\alpha+2 )t^2+a_1^2(2n+2\alpha-1)t+(n+1)\left(a_1 a_1't^2-t^3\right)\right)w_{n}'    \notag \\ \notag \\
 &= t^2(t^2-a_1^2)w_{n}''     + (n+1) a_1''t^2w_{n+1}   \\
&\quad +\Big(\left(n(n-1)+ \left(n(a_1')^2-a_1a_1''\right)(n+2\alpha )\right)t^2 - n a_1a_1'
 \left( 2n+2\alpha-1\right)t\Big)w_{n}  \\
&\quad +\left(2(1-n)t^3- a_1a_1'( 2\alpha+1 )t^2+a_1^2(2n+2\alpha-1)t \right)w_{n}'
\end{align*}

We multiply this by $a_1't-a_1$ in order to use\eqnref{onlywn+1} to eliminate $w_{n+1}$:
\begin{align*}
0
 &= t^2(t^2-a_1^2)w_{n}'' (a_1't-a_1)    + (n+1)a_1''t^2(a_1't-a_1)w_{n+1}   \\
&\quad +\Big(\left(n(n-1)+ \left(n(a_1')^2-a_1a_1''\right)(n+2\alpha )\right)t^2 - n a_1a_1'
 \left( 2n+2\alpha-1\right)t\Big)(a_1't-a_1)w_{n}  \\
&\quad +\left(2(1-n)t^3- a_1a_1'( 2\alpha+1 )t^2+a_1^2(2n+2\alpha-1)t \right)(a_1't-a_1)w_{n}'
 \notag \\ \notag \\
 &= t^2(t^2-a_1^2)(a_1't-a_1)w_{n}''     +   a_1''t^2\Big( -t(t^2-a_1^2)w_{n}'   \notag \\
&\quad -\left(-nt^2- a_1a_1'(n+2\alpha )t+2a_1^2(n+\alpha)\right)w_{n} \Big)  \\
&\quad +\Big(\left(n(n-1)+ \left(n(a_1')^2-a_1a_1''\right)(n+2\alpha )\right)t^2 - n a_1a_1'
 \left( 2n+2\alpha-1\right)t\Big)(a_1't-a_1)w_{n}  \\
&\quad +\left(2(1-n)t^3- a_1a_1'( 2\alpha+1 )t^2+a_1^2(2n+2\alpha-1)t \right)(a_1't-a_1)w_{n}'
 \notag \\ \notag \\
 &= t^2(t^2-a_1^2)(a_1't-a_1)w_{n}''     - a_1''t^3 (t^2-a_1^2)w_{n}'   \notag \\
&\quad - a_1''t^2\left(-nt^2- a_1a_1'(n+2\alpha )t+2a_1^2(n+\alpha) \right)w_{n}    \\
&\quad +\Big(\left(n(n-1)+ \left(n(a_1')^2-a_1a_1''\right)(n+2\alpha )\right)t^2 - n a_1a_1'
 \left( 2n+2\alpha-1\right)t\Big)(a_1't-a_1)w_{n}  \\
&\quad +\left(2(1-n)t^3- a_1a_1'( 2\alpha+1 )t^2+a_1^2(2n+2\alpha-1)t \right)(a_1't-a_1)w_{n}'
 \notag \\ \notag \\
 &= t^2(t^2-a_1^2)(a_1't-a_1)w_{n}''    \notag \\
&\quad +\Big(  -   a_1''t^3 (t^2-a_1^2)\notag \\
&\qquad+ \left(2(1-n)t^3- a_1a_1'( 2\alpha+1 )t^2+a_1^2(2n+2\alpha-1)t \right)(a_1't-a_1)\Big)w_{n}'
 \notag \\
 &\quad +\Big(- a_1''t^2\left(-nt^2- a_1a_1'(n+2\alpha )t+2a_1^2(n+\alpha) \right)     \\
& +\Big(\left(n(n-1)+ \left(n(a_1')^2-a_1a_1''\right)(n+2\alpha )\right)t^2 - n a_1a_1'
 \left( 2n+2\alpha-1\right)t\Big)(a_1't-a_1)\Big)w_{n} \notag
 \end{align*}

Recall
 $$
\sum_{n\geq 0} w_n(t)z^{n}= \frac{b_0}{(1-2a_1z+t^2z^2)^\alpha}=\sum_{n\geq 1}b_{n-1}z^{n-1}=\sum_{n\geq 0}b_{n}z^{n}
 $$

Setting $\alpha =1$ we get the differential equation that the $b _n$ satisfy:

\begin{align}
0\label{tsquaredcase}
 &= t^2(t^2-a_1^2)(a_1't-a_1)y''      \\
&\quad +\Big(  -   a_1''t^3 (t^2-a_1^2) + \left(2(1-n)t^3- 3a_1a_1't^2+a_1^2(2n+1)t \right)(a_1't-a_1)\Big)y'
 \notag \\
 &\quad +\Big(- a_1''t^2\left(-nt^2- a_1a_1'(n+2 )t+2a_1^2(n+1) \right)     \notag \\
&\qquad +\Big(\left(n(n-1)+ \left(n(a_1')^2-a_1a_1''\right)(n+2 )\right)t^2 - n a_1a_1'
 \left( 2n+1\right)t\Big)(a_1't-a_1)\Big)y\notag
 \end{align}
where $\displaystyle{a_1(t):=\frac{t^2-1}{\sqrt{2(\beta-1)}}}$.  Note that $\displaystyle{t^2-a_1^2=2\frac{p}{\beta+1}}$.

After dividing this by $t$ and simplifying, this differential equation becomes
\begin{align}\label{firsttsquaredode}
0&=t \left(t^2+1\right) \left(t^4-2 \beta  t^2+1\right)y''  \\
&\quad -\left((2 n-3) t^6+t^4 (-4 \beta  n+2 n-5)+t^2(4 \beta -4 \beta  n+2 n+3)+2 n+1 \right)y'
 \notag \\
 &\quad -2\left(2 n t^5+n t^3 (\beta +(\beta +1) n+5)+n t (-\beta +(\beta +1) n+1)\right)y.\notag
\end{align}
Thus we have written the above differential equation in the form
$P^1(t)y''+Q_n(t)y'+R_n(t)y=0$, with
 \begin{align*}
 P^1(t)&=t \left(t^2+1\right) \left(t^4-2 \beta  t^2+1\right), \\
 Q_n(t)&= -\left((2 n-3) t^6+t^4 (-4 \beta  n+2 n-5)+t^2(4 \beta -4 \beta  n+2 n+3)+2 n+1 \right) \\
 &=-2 (n-1) \left(t^2+1\right) \left(t^4-2 \beta  t^2+1\right)+t^6+(4 \beta +3) t^4-5 t^2 -3 ,\\
 R_n(t)&=-2n\left(2  t^5+ t^3 (\beta +(\beta +1) n+5)+ t (-\beta +(\beta +1) n+1)\right).
 \end{align*}
 Note $Q_{n+1}(t)=Q_n(t)-2P^1t^{-1}$.
 The above derivation required that $\alpha\not \in\mathbb Z$.  We will show that the $b_n$ satisfy the above differential equation \eqnref{firsttsquaredode} in the next proposition.\end{proof}

 \begin{proposition}
The polynomials $b_n(t)$ satisfy \eqnref{firsttsquaredode} for all $n\geq 1$.
 \end{proposition}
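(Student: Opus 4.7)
The derivation leading to \eqnref{firsttsquaredode} in the previous theorem assumed $\alpha \notin \mathbb{Z}$, since divisions by $(\alpha-1)$, $(n+\alpha)$ and $(\alpha-n)$ appear at various steps; in particular, the case $\alpha = 1$ (which gives $w_n = b_n$) lies in the excluded locus, so the proposition does not follow immediately from the earlier computation. My plan is to promote that parametric identity to a polynomial identity in $\alpha$ which remains valid at $\alpha = 1$, and then specialize.

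The key observation is that $w_n(t,\alpha)$, defined as the coefficient of $z^n$ in $b_0/(1-2a_1z+t^2z^2)^\alpha$, is a polynomial in $\alpha$ of degree at most $n$: expanding
$$
\frac{1}{(1-u)^\alpha}=\sum_{k\geq 0}\binom{\alpha+k-1}{k}u^k
$$
with $u = 2a_1 z - t^2 z^2$, the coefficient of $z^n$ is a $\mathbb{C}[t]$-linear combination of binomials $\binom{\alpha+k-1}{k}$ for $\lceil n/2\rceil \leq k \leq n$, each of which is polynomial in $\alpha$. Moreover, the second-order differential operator $\mathcal{L}^\alpha_n$ obtained just before the substitution $\alpha=1$ in the previous derivation has coefficients polynomial in $(\alpha,t)$. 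Consequently $\mathcal{L}^\alpha_n[w_n(\cdot,\alpha)]$ is a polynomial in $\alpha$ with coefficients in $\mathbb{C}[t]$; having been established on the cofinite set $\mathbb{C}\setminus\{1,-n,n\}$ in the previous theorem, it must vanish identically by the identity principle. Specializing $\alpha = 1$ and using $w_n(t,1)=b_n(t)$ then yields \eqnref{firsttsquaredode}.

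To make this rigorous I would revisit the derivation and verify at each cancellation step that the preceding identity has the divided factor as a literal common factor in $\mathbb{C}[\alpha,t]$, so that cancellation produces a polynomial identity rather than merely a rational one. This is transparent at the step labelled ``Assuming $\alpha \neq 1$ we get'', where the equation literally factors out $(\alpha-1)$, and the two subsequent cancellations by $(n+\alpha)$ and $(\alpha-n)$ follow the same visible pattern. The main obstacle is precisely this bookkeeping, since the derivation is long and several rearrangements are performed along the way. A safer but more laborious alternative is to induct on $n$ using the clean three-term recurrence $b_{n+2} = 2a_1 b_{n+1} - t^2 b_n$ (the $\alpha=1$ specialization of \eqnref{recursionreln}), together with the visibly simple relation $Q_{n+1} = Q_n - 2P^1/t$ noted in the text and the decomposition of $R_n$ as a polynomial of degree two in $n$ with polynomial coefficients in $t$, reducing $\mathcal{L}_{n+2}[b_{n+2}]$ modulo the inductive hypotheses $\mathcal{L}_{n+1}[b_{n+1}]=\mathcal{L}_n[b_n]=0$ to a residual that one then verifies vanishes by direct algebra; the base cases $n=1,2$ can be checked by plugging the explicit polynomials $b_1=2a_1b_0$ and $b_2=b_0(4a_1^2-t^2)$ into \eqnref{firsttsquaredode}.
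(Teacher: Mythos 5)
Your primary argument is correct but takes a genuinely different route from the paper. The paper proves the proposition by induction on $n$: it specializes the recursion \eqnref{recursionreln} to $\alpha=1$ to get $b_{n+1}-2a_1b_n+t^2b_{n-1}=0$, differentiates it, uses the relation $Q_{n+1}=Q_n-2P^1t^{-1}$ to express $P^1b_{n+1}''+Q_{n+1}b_{n+1}'$ in terms of $-R_{n+1}b_{n+1}$ plus a remainder $\mathrm{Rem}(t,n)$, and then checks that $\mathrm{Rem}(t,n+1)-2a_1\mathrm{Rem}(t,n)+t^2\mathrm{Rem}(t,n-1)=0$ via \eqnref{recursionreln} and \eqnref{recursionrelndir} --- i.e.\ exactly the ``laborious alternative'' you sketch at the end. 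Your main argument instead observes that $w_n(t,\alpha)$ is polynomial in $\alpha$ (of degree $\leq n$, via the binomial expansion of $(1-u)^{-\alpha}$), that the operator $\mathcal{L}_n^\alpha$ appearing just before the specialization in \eqnref{tsquaredcase} has coefficients in $\C[\alpha,t]$, and that $\mathcal{L}_n^\alpha[w_n(\cdot,\alpha)]=0$ holds for all $\alpha$ outside a finite set, hence identically by the identity principle, hence at $\alpha=1$ where $w_n(t,1)=b_n(t)$. This is a legitimate and arguably cleaner proof; it buys you the result without re-deriving and re-cancelling the long remainder computation. One small simplification of your own plan: you do not actually need the \emph{intermediate} cancellation steps to be polynomial identities in $\alpha$ --- for each \emph{fixed} admissible $\alpha$ the divisions are by nonzero scalars, so each intermediate identity holds pointwise on a cofinite set of $\alpha$, and polynomiality is only needed for the \emph{final} identity to invoke the identity principle. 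The trade-off is that your route leans entirely on the correctness of the long generic-$\alpha$ derivation, which the authors themselves treat as heuristic (hence their separate inductive verification); the paper's induction is self-contained modulo the two recursions, while yours is shorter but inherits the full derivation as a hypothesis.
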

 \begin{proof} We prove this by induction on $n\geq 0$.  We have the recursion relation \eqnref{recursionreln} so that for $\alpha =1$ we get for $n\geq 1$
 \begin{equation}
 b_{n+1}(t)-2a_1(t) b_{n}(t)+ t^2b_{n-1}(t)=0.
\end{equation}
Then
\begin{align}
b_{n+1}'&=2(a_1' b_{n}+a_1 b_{n}')-2tb_{n-1} -t^2b_{n-1}' \\
b_{n+1}''&=2(a_1''b_{n}+2a_1' b_{n}'+a_1b_n'')-2b_{n-1}-4tb_{n-1}' -t^2b_{n-1}'' .
\end{align}
Thus
 \begin{align*}
 P^1(t)b_{n+1}''&+Q_{n+1}b_{n+1}'\\ &= P^1\left(2(a_1''b_{n}+2a_1' b_{n}'+a_1b_n'')-2b_{n-1}-4tb_{n-1}' -t^2b_{n-1}''\right) \\
 &\quad +Q_{n+1}\left(2(a_1' b_{n}+a_1 b_{n}')-2tb_{n-1}-t^2b_{n-1}' \right)  \\ \\
&= 2P^1a_1''b_{n}+4P^1a_1' b_{n}'+2a_1P^1b_n''-2P^1b_{n-1}-4tP^1b_{n-1}' -t^2P^1b_{n-1}'' \\
 &\quad +2Q_{n+1}a_1' b_{n}+2Q_{n+1}a_1 b_{n}'-2tQ_{n+1}b_{n-1} -t^2Q_{n+1}b_{n-1}'  \\ \\
&= 2P^1a_1''b_{n}+4P^1a_1' b_{n}'+2a_1P^1b_n''-2P^1b_{n-1}-4tP^1b_{n-1}' -t^2P^1b_{n-1}'' \\
 &\quad +2Q_{n+1}a_1' b_{n}+2(Q_{n}-2P^1t^{-1})a_1 b_{n}'-2tQ_{n+1}b_{n-1} \\
 &\qquad  -t^2(Q_{n-1}-4P^1t^{-1})b_{n-1}'  \\  \\
&= 2P^1a_1''b_{n}+4P^1a_1' b_{n}' -2P^1b_{n-1}-4tP^1b_{n-1}'  \\
 &\quad +2Q_{n+1}a_1' b_{n}-4P^1t^{-1}a_1 b_{n}'-2tQ_{n+1}b_{n-1} \\
 &\qquad  +4t^2P^1t^{-1}b_{n-1}'-2R_na_1b_n+t^2R_{n-1}b_{n-1}  \\  \\
 &= (2P^1a_1'' +2Q_{n+1}a_1'   -2R_na_1)b_n\\
 &\quad +4P^1a_1' b_{n}'   -4P^1t^{-1}a_1 b_{n}' \\
 &\qquad  -(2P^1+2tQ_{n+1}-t^2R_{n-1})b_{n-1}  \\  \\
&= -R_{n+1}b_{n+1}-4 n a'_1 (1 + t^2)^2 (t^2 - \beta)b_n\\
 &\quad +4P^1a_1' b_{n}'   -4P^1t^{-1}a_1 b_{n}' \\
 &\qquad  +4 (n+1) t \left(t^2+1\right)^3b_{n-1}  \\
 \end{align*}
 Thus the proof reduces to proving showing the terms after $-R_{n+1}b_{n+1}$ vanish for all $n\geq 1$.
 Set
 \begin{align*}
 \text{Rem}(t,n)&:=-4 n a'_1 (1 + t^2)^2 (t^2 - \beta)b_n+4P^1(a_1'-t^{-1}a_1) b_{n}' \\
 &\quad  +4 (n+1) t \left(t^2+1\right)^3b_{n-1}.
 \end{align*}
 If one expands out
 \begin{equation}\label{finalreduction}
 \text{Rem}(t,n+1)-2a(t)\text{Rem}(t,n)+t^2\text{Rem}(t,n-1)
 \end{equation}
 using \eqnref{recursionreln} and \eqnref{recursionrelndir}, then \eqnref{finalreduction} reduces to zero.   Hence  by induction we have proven that the $b_n$ satisfy \eqnref{firsttsquaredode}.
 \end{proof}

In order to put this into Sturm-Louiville like form we first divide by $t \left(t^2+1\right) \left(t^4-2 \beta  t^2+1\right)$, and then multiply the differential equation above by the integrating factor
\begin{align*}
\exp\int \frac{Q_n(t)}{P^1(t)}\,dt  &=\frac{\left(t^4-2 \beta
   t^2+1\right)^{3/2}}{\left(t^2+1\right) t^{2 n+1}}.
   \end{align*}
   We obtain
   \begin{equation}\label{niceform}
  \frac{d}{dt}\left(\frac{\left(t^4-2 \beta
   t^2+1\right)^{3/2}}{\left(t^2+1\right) t^{2 n+1}}\frac{dy}{dt}\right)+\frac{\left(t^4-2 \beta
   t^2+1\right)^{1/2}R_n(t)}{(t^2+1)^2t^{2n+2}}y=0.
   \end{equation}
   If we set
   $$
\frac{d}{d\tau_n}:=\frac{\left(t^4-2 \beta
   t^2+1\right)^{3/2}}{\left(t^2+1\right) t^{2 n+1}}\frac{d}{dt}
   $$
    we get
   $$
  \tau_n(t)=\int \frac{\left(t^2+1\right) t^{2 n+1}}{\left(t^4-2 \beta
   t^2+1\right)^{3/2}}\, dt,
   $$
which is an elliptic integral and can be expressed in the form of a series about $t=0$. We will not write this out as it doesn't seem to simplify matters.
Then \eqnref{niceform} becomes
\begin{equation*}
 \frac{\left(t^2+1\right) t^{2 n+1}}{\left(t^4-2 \beta
   t^2+1\right)^{3/2}}\frac{d^2y}{d\tau_n^2}+\frac{\left(t^4-2 \beta
   t^2+1\right)^{1/2}R_n(t)}{(t^2+1)^2t^{2n+2}}y=0.
   \end{equation*}
or
\begin{equation} \label{niceform2}\boxed{
\frac{d^2y}{d\tau_n^2}+\frac{\left(t^4(\tau_n)-2 \beta
   t^2(\tau_n)+1\right)^{2}R_n(t(\tau_n))}{(t^2(\tau_n)+1)^3t^{4n+3}(\tau_n)}y=0.}
   \end{equation}

 \begin{theorem} Define polynomials $c_n,d_{n-1} $ by $c_n+d_{n}\sqrt{P}=\lambda_1^n$ for $n\in\mathbb N$. Then
  $d_n(t)$ satisfies $$P^2(t)y''+Q_n^2(t)y'+R_n^2(t)y=0$$
  with $$\aligned
P^2(t)&=t(1-t^2)(t^4-2\beta t^2+1),\\
Q^2_n(t)&=(2 n-3) t^6-t^4 (4 \beta  n+2 n-5)+t^2 (-4 \beta +4 \beta  n+2 n+3)-2 n-1, \\
R_n^2(t)&=2  n t \left(\beta -(\beta -1) n+t^2 (\beta +(\beta -1) n-5)+2 t^4+1\right)\endaligned$$
 \end{theorem}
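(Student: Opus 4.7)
The plan is to follow the template of the preceding theorem on the $b_n$ essentially verbatim, with $\lambda_1$ playing the role of $\lambda_2$ and the pair $(c_1,d_0)=\bigl((t^2+1)/\sqrt{2(\beta+1)},\sqrt{(\beta-1)/2}\bigr)$ replacing $(a_1,b_0)$. A direct computation gives $\lambda_1\bar\lambda_1=c_1^2-d_0^2P=t^2$, the identical algebraic identity that drives the denominator of the generating function in the $b_n$ case. Summing $\sum_{n\geq 0}\lambda_1^n z^n$ as a geometric series and separating the parts with and without $\sqrt{P}$ yields
\begin{equation*}
\sum_{n\geq 0} c_n(t) z^n + \sqrt{P}\sum_{n\geq 1} d_{n-1}(t) z^n = \frac{1-c_1 z + d_0\sqrt{P}\,z}{1-2c_1 z+t^2 z^2},
\end{equation*}
whence $\sum_{n\geq 0}d_n(t)z^n = d_0/(1-2c_1 z+t^2 z^2)$.

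Next, I mimic the Arfken--Weber deformation used in the $b_n$ proof by introducing the auxiliary family
\begin{equation*}
\sum_{n\geq 0} w_n(t) z^n = \frac{d_0}{(1-2c_1 z + t^2 z^2)^\alpha},
\end{equation*}
so that $w_n|_{\alpha=1}=d_n$. Differentiation in $z$ followed by clearing the denominator gives the three-term recurrence $(n+1)w_{n+1}-2c_1(n+\alpha)w_n+(n-1+2\alpha)t^2 w_{n-1}=0$, while differentiation in $t$ gives a companion relation linking $w_n'$, $w_{n\pm 1}$, and $w_{n\pm 1}'$. Because every elimination performed in the $b_n$ proof uses only the formal symbols $a_1$, $a_1'$, $a_1''$, $t^2-a_1^2$ and $P$, the same chain of algebraic moves applies verbatim with $a_1$ replaced by $c_1$ and produces, for generic $\alpha$, a second-order linear ODE for $w_n$. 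Specializing $\alpha=1$ and substituting
\begin{equation*}
c_1'=\tfrac{2t}{\sqrt{2(\beta+1)}},\quad c_1''=\tfrac{2}{\sqrt{2(\beta+1)}},\quad c_1't-c_1=\tfrac{t^2-1}{\sqrt{2(\beta+1)}},\quad t^2-c_1^2=-\tfrac{t^4-2\beta t^2+1}{2(\beta+1)},
\end{equation*}
then dividing by the appropriate power of $t$ and absorbing the overall constant, I expect the leading coefficient to reduce to $P^2(t)=t(1-t^2)(t^4-2\beta t^2+1)$ and the remaining terms to collect into the stated $Q^2_n(t)$ and $R^2_n(t)$.

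The derivation above is only formal for $\alpha\notin\mathbb Z$, so the final step of the plan is to verify directly, by induction on $n$, that $d_n$ satisfies $P^2 y''+Q^2_n y'+R^2_n y=0$. Using the three-term recurrence $d_{n+1}=2c_1 d_n-t^2 d_{n-1}$ and its first two derivatives to express $d_{n+1}^{(k)}$ in terms of $d_n^{(k)}$ and $d_{n-1}^{(k)}$, substitute into $P^2 d_{n+1}''+Q^2_{n+1}d_{n+1}'+R^2_{n+1}d_{n+1}$. By analogy with the $b_n$ case I expect the identity $Q^2_{n+1}=Q^2_n-2P^2/t$ to emerge, allowing the $d_n'$ and $d_{n-1}'$ terms to reassemble into the two inductive hypotheses; what remains is a polynomial remainder $\mathrm{Rem}(t,n)$. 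The main obstacle is then to verify the three-term telescoping identity $\mathrm{Rem}(t,n+1)-2c_1(t)\mathrm{Rem}(t,n)+t^2\mathrm{Rem}(t,n-1)=0$; combined with direct checking of the base cases $n=0,1$ using explicit $d_0$, $d_1$, $d_2$, this closes the induction. The bookkeeping in the elimination of the second paragraph and in the telescoping identity above is the only genuinely computational work in the argument.
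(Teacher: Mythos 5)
Your proposal is correct in outline, but it takes a genuinely different route from the paper. The paper does not rerun the Arfken--Weber elimination for $\lambda_1$: it instead observes the duality $-\imath\,\lambda_1(-\beta,\imath t)=\lambda_2(\beta,t)$, deduces $c_n(\beta,t)=(-\imath)^n a_n(-\beta,\imath t)$ and $d_n(\beta,t)=(-\imath)^n b_n(-\beta,\imath t)$, and then simply transports the already-established differential equation for the $b_n$ through the substitution $\beta\mapsto-\beta$, $t\mapsto\imath t$ to read off $P^2$, $Q^2_n$, $R^2_n$. That argument is a few lines long and automatically inherits the inductive justification of setting $\alpha=1$ from the proposition on the $b_n$, at the cost of some care with branch choices for the square roots in the identity relating $\lambda_1$ and $\lambda_2$ and with the index conventions. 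Your plan --- form $\sum_{n\geq 0}d_n z^n = d_0/(1-2c_1z+t^2z^2)$ from $c_1^2-d_0^2P=t^2$, run the identical elimination with $c_1$ in place of $a_1$, specialize $\alpha=1$, and close with the telescoping induction --- is sound: every step of the $b_n$ derivation uses only the symbols $a_1$, $a_1'$, $a_1''$, $t^2-a_1^2$ and the three-term recurrence, all of which have exact analogues here (indeed $c_1't-c_1=(t^2-1)/\sqrt{2(\beta+1)}$ and $t^2-c_1^2=-(t^4-2\beta t^2+1)/(2(\beta+1))$, so the leading coefficient collapses to $t(1-t^2)(t^4-2\beta t^2+1)$ as you predict), and the identity $Q^2_{n+1}=Q^2_n-2P^2t^{-1}$ you anticipate for the induction does hold. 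The price of your route is that the long elimination and the remainder computation must be repeated rather than quoted; the benefit is that it is self-contained and bypasses the square-root branch and indexing subtleties hidden in the paper's substitution argument.
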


\begin{proof}
We next will derive  second order linear differential equations satisfied by $c_n$ and $d_n$ where are defined by $c_n+d_{n}\sqrt{p(t)}=(c_1+d_1\sqrt{p})^n$, for
\begin{equation}\label{secondtsquaredcondition}
c_0=1,\quad d_0=0,\quad c_1=\frac{t^2+1}{\sqrt{2(\beta+1)}}\quad d_1=\sqrt{\frac{\beta-1}{2}}.
\end{equation}
First we have
\begin{align*}
\sum_{n\geq 0}c_nz^n+\sum_{n\geq 0}d_{n}z^n\sqrt{p}
&=\sum_{n\geq 0}(c_1+d_1\sqrt{p})^nz^n  \\
&=\sum_{n\geq 0}(\lambda_2(\beta,t))^nz^n  \\
&=\sum_{n\geq 0}(-\imath)^n(\lambda_1(-\beta,\imath t))^nz^n \\
&=\sum_{n\geq 0}(-\imath)^n(a_1(-\beta,\imath t)+b_1(-\beta,\imath t)\sqrt{p})^nz^n \\
&=\sum_{n\geq 0}(-\imath)^n(a_n(-\beta,\imath t)+b_n(-\beta,\imath t)\sqrt{p})z^n
\end{align*}
by the following formula
\begin{align}\label{second duality}
-\imath \lambda_1(-\beta,\imath t)&=-\imath \left(\frac{-t^2+1}{\sqrt{2(-\beta+1)}}+\sqrt{\frac{-\beta-1}{2}}\sqrt{p}\right)  \\
&=  \frac{t^2-1}{\sqrt{2(\beta-1)}}+\sqrt{\frac{\beta+1}{2}}\sqrt{p} \notag \\
&=\lambda_2(\beta,t),\notag
\end{align} where $\imath=\sqrt{-1}$.
Thus
\begin{align*}
c_n(\beta,t)=(-\imath)^na_n(-\beta,\imath t),\quad d_n(\beta,t)=(-\imath)^nb_n(-\beta,\imath t).
\end{align*}
We then set
\begin{align*}
P^2(t)&:=t(1-t^2)(t^4-2\beta t^2+1) \\
Q^2_n(t)&=(2 n-3) t^6-t^4 (4 \beta  n+2 n-5)+t^2 (-4 \beta +4 \beta  n+2 n+3)-2 n-1 \\
R_n^2(t)&=2  n t \left(\beta -(\beta -1) n+t^2 (\beta +(\beta -1) n-5)+2 t^4+1\right)
\end{align*}
 Consequently the $d_n(\beta,t)$ satisfy the second order linear differential equation
\begin{equation}
P^2(t)y''+Q_n^2(t)y'+R_n^2(t)y=0.
\end{equation}
\end{proof}

\section{Acknowledgements}  The two authors would like to thank the Mittag-Leffler Institute in Djursholm, Sweden for its hospitality and great working environment where part of this work was done.  The first author is partially supported by a collaboration grant from the Simons Foundation (\#319261).   The second author is partially supported by NSF of China (Grant 11271109) and NSERC.

\begin{thebibliography}{Sch03b}
\bibitem[BJ05]{MR2203507}
Mikhail Belolipetsky and Gareth~A. Jones.
\newblock Automorphism groups of {R}iemann surfaces of genus {$p+1$}, where
  {$p$} is prime.
\newblock {\em Glasg. Math. J.}, 47(2):379--393, 2005.


\bibitem[Bre00]{MR1796706}
Thomas Breuer.
\newblock {\em Characters and automorphism groups of compact {R}iemann
  surfaces}, volume 280 of {\em London Mathematical Society Lecture Note
  Series}.
\newblock Cambridge University Press, Cambridge, 2000.

\bibitem[CFT13]{MR3090080}
Ben Cox, Vyacheslav Futorny, and Juan~A. Tirao.
\newblock D{JKM} algebras and non-classical orthogonal polynomials.
\newblock {\em J. Differential Equations}, 255(9):2846--2870, 2013.


\bibitem[CGLZ1]{CGLZ1} B. Cox, X. Guo,  R. Lu,  K. Zhao,  B. Cox, X. Guo,  R. Lu,  K. Zhao,  n-Point Virasoro algebras and their modules of densities, {\em Commun. Contemp. Math.}  16  (2014),  no. 3, 1350047.

\bibitem[CGLZ2]{CGLZ2} B. Cox, X. Guo,  R. Lu,  K. Zhao,  Simple superelliptic Lie algebras,  arXiv:1412.7777.

\bibitem[DJKM]{DJKM} E. Date, M. Jimbo, M. Kashiwara,T. Miwa, Landau-Lifshitz equation: solitons, quasi periodic solutions and infinite-dimensional Lie algebras, J. Phys. A16(2)(1983), 221–236.

\bibitem[DS04]{MR2183270}
Art{\=u}ras Dubickas and J{\"o}rn Steuding.
\newblock The polynomial {P}ell equation.
\newblock {\em Elem. Math.}, 59(4):133--143, 2004.


\bibitem[FBZ01]{MR1849359}
Edward Frenkel and David Ben-Zvi.
\newblock {\em Vertex algebras and algebraic curves}, volume~88 of {\em
  Mathematical Surveys and Monographs}.
\newblock American Mathematical Society, Providence, RI, 2001.

\bibitem[Jor86]{MR829385}
D.~A. Jordan.
\newblock On the ideals of a {L}ie algebra of derivations.
\newblock {\em J. London Math. Soc. (2)}, 33(1):33--39, 1986.

\bibitem[KL91]{MR1104840}
David Kazhdan and George Lusztig.
\newblock Affine {L}ie algebras and quantum groups.
\newblock {\em Internat. Math. Res. Notices}, (2):21--29, 1991.

\bibitem[KN87a]{MR925072}
Igor~Moiseevich Krichever and S.~P. Novikov.
\newblock Algebras of {V}irasoro type, {R}iemann surfaces and strings in
  {M}inkowski space.
\newblock {\em Funktsional. Anal. i Prilozhen.}, 21(4):47--61, 96, 1987.

\bibitem[KN87b]{MR902293}
Igor~Moiseevich Krichever and S.~P. Novikov.
\newblock Algebras of {V}irasoro type, {R}iemann surfaces and the structures of
  soliton theory.
\newblock {\em Funktsional. Anal. i Prilozhen.}, 21(2):46--63, 1987.

\bibitem[KN89]{MR998426}
Igor~Moiseevich Krichever and S.~P. Novikov.
\newblock Algebras of {V}irasoro type, the energy-momentum tensor, and operator
  expansions on {R}iemann surfaces.
\newblock {\em Funktsional. Anal. i Prilozhen.}, 23(1):24--40, 1989.

\bibitem[Sch03a]{MR2058804}
Martin Schlichenmaier.
\newblock Higher genus affine algebras of {K}richever-{N}ovikov type.
\newblock {\em Mosc. Math. J.}, 3(4):1395--1427, 2003.

\bibitem[Sch03b]{MR1989644}
Martin Schlichenmaier.
\newblock Local cocycles and central extensions for multipoint algebras of
  {K}richever-{N}ovikov type.
\newblock {\em J. Reine Angew. Math.}, 559:53--94, 2003.

\bibitem[Sha03]{MR2035219}
Tanush Shaska.
\newblock Determining the automorphism group of a hyperelliptic curve.
\newblock In {\em Proceedings of the 2003 {I}nternational {S}ymposium on
  {S}ymbolic and {A}lgebraic {C}omputation}, pages 248--254 (electronic). ACM,
  New York, 2003.

\bibitem[She03]{MR2072650}
O.~K. She{\u\i}nman.
\newblock Second-order {C}asimirs for the affine {K}richever-{N}ovikov algebras
  {$\widehat{\mathfrak g\mathfrak l}_{g,2}$} and {$\widehat{\mathfrak
  s\mathfrak l}_{g,2}$}.
\newblock In {\em Fundamental mathematics today ({R}ussian)}, pages 372--404.
  Nezavis. Mosk. Univ., Moscow, 2003.

\bibitem[She05]{MR2152962}
O.~K. Sheinman.
\newblock Highest-weight representations of {K}richever-{N}ovikov algebras and
  integrable systems.
\newblock {\em Uspekhi Mat. Nauk}, 60(2(362)):177--178, 2005.

\bibitem[Skr88]{MR966871}
S.~M. Skryabin.
\newblock Regular {L}ie rings of derivations.
\newblock {\em Vestnik Moskov. Univ. Ser. I Mat. Mekh.}, (3):59--62, 1988.

\bibitem[SS98]{MR1666274}
M.~Schlichenmaier and O.~K. Scheinman.
\newblock The {S}ugawara construction and {C}asimir operators for
  {K}richever-{N}ovikov algebras.
\newblock {\em J. Math. Sci. (New York)}, 92(2):3807--3834, 1998.
\newblock Complex analysis and representation theory, 1.

\bibitem[SS99]{MR1706819}
M.~Shlichenmaier and O.~K. Sheinman.
\newblock The {W}ess-{Z}umino-{W}itten-{N}ovikov theory,
  {K}nizhnik-{Z}amolodchikov equations, and {K}richever-{N}ovikov algebras.
\newblock {\em Uspekhi Mat. Nauk}, 54(1(325)):213--250, 1999.

\end{thebibliography}

\def\cprime{$'$}

\end{document}